      \OR\ifentrytype{incollection}\OR\ifentrytype{inproceedings}%
      \OR\ifentrytype{inreference}} {\printtext[title]{%
\definecolor{darkblue}{rgb}{0.13,0.13,0.39}
\def\dash---{\kern.16667em---\penalty\exhyphenpenalty\hskip.16667em\relax}
\let\oldmarginpar\marginpar
\renewcommand\marginpar[1]{\-\oldmarginpar[\raggedleft\footnotesize #1]%
  {\raggedright{\small\textsf{#1}}}}
\numberwithin{equation}{section}
\newcommand{\fpref}[2]{\cite[#1 \ref*{fp-#2}]{fixedpt}}
\newcommand{\fpeqref}[1]{\cite[Eqn. (\ref*{fp-#1})]{fixedpt}}
\newcommand{\fprefst}[1]{\ref*{fp-#1}}
\newtheorem{thm}{Theorem}[section]
\newtheorem{prop}[thm]{Proposition}
\newtheorem{cor}[thm]{Corollary}
\theoremstyle{definition}
\newtheorem{rem}[thm]{Remark}
\newtheorem*{rem*}{Remark}
\newcommand{\I}{{\rm i}}
\newcommand{\pp}{\mathbb{P}}
\newcommand{\ee}{\mathbb{E}}
\newcommand{\rr}{\mathbb{R}}
\newcommand{\nn}{\mathbb{N}}
\newcommand{\zz}{\mathbb{Z}}
\newcommand{\p}{\partial}
\newcommand{\uno}[1]{\mathbf{1}_{#1}}
\newcommand{\ep}{\varepsilon}
\newcommand{\wt}{\widetilde}
\newcommand{\inv}[1]{\frac{1}{#1}}
\renewcommand{\d}{\mathrm{d}}
\newcommand{\ts}{\hspace{0.1em}}
\newcommand{\tts}{\hspace{0.05em}}
\newcommand{\tsm}{\hspace{-0.1em}}
\newcommand{\fh}{\mathfrak{h}}
\newcommand{\ff}{\mathfrak{f}}
\newcommand{\fX}{\mathfrak{X}}
\newcommand{\ft}{\mathbf{t}}
\newcommand{\fs}{\mathbf{s}}
\newcommand{\fx}{\mathbf{x}}
\newcommand{\fy}{\mathbf{y}}
\newcommand{\fz}{\mathbf{z}}
\newcommand{\fu}{\mathbf{u}}
\newcommand{\fa}{\mathbf{a}}
\newcommand{\fb}{\mathbf{b}}
\newcommand{\fB}{\mathbf{B}}
\newcommand{\aip}{\mathcal{A}}
\newcommand{\SM}{\mathcal{S}}
\newcommand{\SN}{\bar{\mathcal{S}}}
\renewcommand{\P}{\chi}
\newcommand{\fK}{\mathbf{K}}
\newcommand{\fI}{\mathbf{I}}
\newcommand{\ftau}{\bm{\tau}}
\newcommand{\fT}{\mathbf{S}}
\newcommand{\Xb}{\mathbf{X}}
\newcommand{\Gb}{\mathbf{G}}
\newcommand{\Kb}[1][]{\fK^{#1{\uptext{RBM}}}}
\newcommand{\tKb}[1][]{\wt{\fK}^{#1{\uptext{RBM}}}}
\newcommand{\Xt}{X}
\newcommand{\Kt}{K^{\uptext{TSP}}}
\newcommand{\Kf}{\fK^{\uptext{FP}}}
\newcommand{\Psit}{\Psi}
\newcommand{\Psib}{\bm{\Psi}}
\newcommand{\Phit}{\Phi}
\newcommand{\Phib}{\bm{\Phi}}
\newcommand{\h}{h}
\newcommand{\hb}{\mathbf{h}}
\newcommand{\bp}[1]{\bar{\p}^{(#1)}}
\newcommand{\lw}{l}
\newcommand\RedeclareMathOperator{%
  \@ifstar{\def\rmo@s{m}\rmo@redeclare}{\def\rmo@s{o}\rmo@redeclare}%
}
\newcommand\rmo@redeclare[2]{%
  \begingroup \escapechar\m@ne\xdef\@gtempa{{\string#1}}\endgroup
  \expandafter\@ifundefined\@gtempa
     {\@latex@error{\noexpand#1undefined}\@ehc}%
     \relax
  \expandafter\rmo@declmathop\rmo@s{#1}{#2}}
\newcommand\rmo@declmathop[3]{%
  \DeclareRobustCommand{#2}{\qopname\newmcodes@#1{#3}}%
}
\newcommand{\uptext}[1]{\text{\upshape{#1}}}
\DeclareMathOperator{\epi}{\uptext{epi}}
\DeclareMathOperator{\UC}{\uptext{UC}}
\DeclareMathOperator{\Ai}{\uptext{Ai}}
\RedeclareMathOperator{\det}{\mathop{\uptext{det}}}
\RedeclareMathOperator{\ker}{\mathop{\uptext{ker}}}
\RedeclareMathOperator{\exp}{\mathop{\uptext{exp}}}
\RedeclareMathOperator{\log}{\mathop{\uptext{log}}}
\RedeclareMathOperator*{\lim}{\mathop{\uptext{lim}}}
\RedeclareMathOperator*{\sup}{\mathop{\uptext{sup}}}
\RedeclareMathOperator*{\limsup}{\mathop{\uptext{lim\hspace{1pt}sup}}}
\RedeclareMathOperator*{\max}{\mathop{\uptext{max}}}
\RedeclareMathOperator*{\inf}{\mathop{\uptext{inf}}}
\RedeclareMathOperator*{\min}{\mathop{\uptext{min}}}
\begin{document}

\title{One-sided reflected Brownian motions and the KPZ fixed point}

\date{October 21, 2020}

\author{Mihai Nica} \address[M.~Nica]{
  Department of Mathematics\\
  University of Toronto\\
  40 St. George Street\\
  Toronto, Ontario\\
  Canada M5S 2E4} \email{mnica@math.toronto.edu}

\author{Jeremy Quastel} \address[J.~Quastel]{
  Department of Mathematics\\
  University of Toronto\\
  40 St. George Street\\
  Toronto, Ontario\\
  Canada M5S 2E4} \email{quastel@math.toronto.edu}

\author{Daniel Remenik} \address[D.~Remenik]{
  Departamento de Ingenier\'ia Matem\'atica and Centro de Modelamiento Matem\'atico (UMI-CNRS 2807)\\
  Universidad de Chile\\
  Av. Beauchef 851, Torre Norte, Piso 5\\
  Santiago\\
  Chile} \email{dremenik@dim.uchile.cl}

\begin{abstract}
 We consider the system of one-sided reflected Brownian motions which is in variational duality with Brownian last passage percolation.
 We show that it has integrable transition probabilities, expressed in terms of Hermite polynomials and hitting times of exponential random walks, and that it converges in the 1:2:3 scaling limit to the KPZ fixed point, the scaling invariant Markov process defined in \cite{fixedpt} and believed to govern the long time large scale fluctuations for all models in the KPZ universality class.
 Brownian last passage percolation was shown recently in \cite{dov} to converge to the Airy sheet (or directed landscape), defined there as a strong limit of a functional of the Airy line ensemble.
 This establishes the variational formula for the KPZ fixed point in terms of the Airy sheet.
\end{abstract}

\maketitle

\section{RBM and the KPZ universality class}

The model of \emph{one-sided reflected Brownian motions} (RBM for short) is a system of reflected Brownian motions $\Xb_{t}(N)\le \Xb_t(N-1)\le \cdots \le \Xb_t(1)$ on $\rr$.
They start from an ordered initial condition $\Xb_{0}(N)< \Xb_0(n-1)<\dotsc <\Xb_0(1)$, perform Brownian motions, and interact with each other by one-sided reflections: $\Xb_t(k+1)$ is reflected to the left off $\Xb_t(k)$, so that the particles always remain ordered. 
This process and its variational dual, Brownian last passage percolation, have been studied intensively from many perspectives, see \cite{oconnellYor-Burke,oconnell-path-RSK,warren-dysonBM} and references therein for some early work, and \cite{gorinShkolnikov-multilevelTASEP,ferrariSpohnWeiss-book,dov,hammond-patchwork,aow-interldiff} for more recent results.
RBM can be defined formally in several equivalent ways.
The easiest is to start with $N$ standard Brownian motions $B_t(1) ,\dotsc,B_t(N)$ initially at $\Xb_0(1),\dotsc,\Xb_0(N)$, let $\Xb_t(1)=B_t(1)$, and then, recursively for $k=1,2,\ldots$, construct $\Xb_t(k)$ by reflecting $B_t(k+1)$ off $\Xb_t(k)$, the reflection $R_fB_t$ of a Brownian motion $B_t$ off any continuous function $f_t$ with $f_0\geq B_0$ being easy to define, e.g. by  the Skorokhod representation $R_fB_t= \min\{ \inf_{0\le s\le t} ( f_s +B_t-B_s ), B_t\}$.
Since the definition is recursive, it is not difficult to have $N= \infty$; the first $n<N$ particles don't even know the other $N-n$ are there.
Note also that using the Skorokhod representation one can let some of the initial positions coincide.
The system is alternatively defined by a system of stochastic differential equations involving the joint local times (see \cite[Sec. 4]{aow-interldiff} and references therein).  One can define other types of reflections, or point interactions for Brownian motions, but as far as is understood at this point, only this one has the integrability described in this article and we will not consider other models here.

From the Skorokhod representation it is not hard to see (see \cite[Eqn. (2.1.4)]{ferrariSpohnWeiss-book}, but note in that book the reflections go the opposite way) that RBM are in variational duality with \emph{Brownian last passage percolation} (BLPP): Given a family of independent, standard two-sided Brownian motions $(W_k)_{k\geq0}$ and $t\leq t'$ in $\rr$, $m\leq m'$ in $\zz$, if we define the \emph{last passage time}
\begin{equation}
\Gb[(t,m)\to(t',m')]=\sup_{t=t_m<\dotsm<t_{m'+1}=t'}\sum_{k=m}^{m'}\big(W_k(t_{k+1})-W_{k}(t_{k})\big)\label{eq:lpt}
\end{equation}
(i.e. the supremum over up-right paths from $(t,m)$ to $(t',m')$ along $\rr\times\zz$ of the Brownian increments collected along the path), then
\begin{equation}
\Xb_t(n)=\min_{\ell\leq n}\!\Big(\Xb_0(\ell)-\Gb[(0,\ell)\to(t,n)]\Big).\label{eq:RBMbLPP}
\end{equation}
The minimization here is over $\ell\ge 1$; equivalently we can add $\Xb_0(\ell)=\infty$, $\ell \le 0$, and minimize over all $\ell\leq n$.
Using the variational formula \eqref{eq:RBMbLPP}, one then extends to two-sided data: Given initial data $\big(\Xb_0(k)\big)_{k\in\zz}$, one defines the left hand side of \eqref{eq:RBMbLPP} for arbitrary $n\in\nn$ by the right hand side, without the restriction $\ell\geq 1$ in the minimum.
However, the minimum may not actually be attained if $\Xb_0(-\ell)$ grows too slowly as $\ell\to\infty$.  The correct condition comes from the growth rate of the last passage times $\Gb[(0,\ell)\to(t,n)]$, which is known to be of order $2\sqrt{(n-\ell)t}$ \cite{glynnWhitt,seppQueues}.
This can be used to show \cite[Prop. 2.4]{ferrariSpohnWeiss-book} that if, for example, $\Xb_0(-\ell)\geq c\tts\ell^{\frac12 + \delta} $ for all $\ell\geq0$ and some $\delta>0$, the minimum is attained for any $t>0$.

RBM and its variational dual BLPP are one of a small group of \emph{integrable} models which lie in the KPZ universality class, a huge class of one dimensional growth models and driven diffusive systems whose large scale fluctuations are conjectured to coincide with those of the Kardar-Parisi-Zhang equation.  
Models in the class can be interpreted as randomly growing height functions $h(t,x)$, $x\in \rr$, $t\ge 0$, which converge after zooming out, $\ep\to 0$ in the 1:2:3 scaling $h\longmapsto \fh_\ep(\ft,\fx) \coloneqq \ep^{1/2} h(\ep^{-3/2}\ft,\ep^{-1}\fx)- c_\ep t$, to a universal Markov process known as the KPZ fixed point, which can be described either through its integrable transition probabilities  \eqref{eq:onesideext2}, or alternatively by a variational formula \eqref{eq:var}/\eqref{eq:var1} involving a limiting multiparameter process called in various contexts the \emph{Airy sheet} or the \emph{directed landscape}.  So the KPZ fixed point and its variational dual the Airy sheet should be thought of as the universal limits in the class.

It is still a great challenge to understand the KPZ universality.
The limit for general initial data is only proved for one or two special models so far, using integrability in a very strong way.
In particular, the height function limit to the KPZ fixed point has only been proved for one model, the \emph{totally asymmetric simple exclusion process (TASEP)} \cite{fixedpt} (see \cite{mqr-variants} for an extension to several variants of TASEP).
On the other hand, the limiting Airy  sheet/directed landscape has only been proved for BLPP \cite{dov}.  Our purpose in this article is three-fold.
We show, 1.  RBM has integrable transition probabilities  (Thm. \ref{thm:RBM-formula}); 2. 
RBM converge to the KPZ fixed point in the 1:2:3 scaling (Thm. \ref{thm:RBMtoKPZ}) ; 3. the variational formula relating the KPZ fixed point and the Airy sheet  (Cor. \ref{cor:KPZfixedptAirySheet}) by combining 1 and 2 with results of \cite{dov}.  A second group \cite{dnv-scalinglongest} is working in obtaining the Airy sheet/directed landscape from exponential last passage percolation, which is in a similar duality with TASEP, providing an alternative route to 2 and 3.  These open the door to import methods  from one side  (e.g. \cite{hammond-patchwork}) to  the other.

RBM can actually be obtained as  the low density limit of TASEP.  Recall TASEP consists of particles on the lattice $\zz$ performing totally asymmetric nearest neighbour random walks with exclusion: Each particle independently attempts jumps to the neighbouring site to the right at rate $1$, the jump being allowed only if that site is unoccupied.
As for RBM we can consider initial conditions in which there is a rightmost particle; the particles remain ordered, and their positions can be denoted $\Xt_t(1)>\Xt_t(2)>\dotsm$.
Again, the dynamics of the first $N$ particles $\Xt_t(1)>\Xt_t(2)>\dotsm>\Xt_t(N)$ is independent of the rest, so the infinite system  makes sense.
Consider TASEP with $N$ particles started from initial positions $\Xt_{0}(1) > \Xt_{0}(2) \dotsc > \Xt_{0}(N)\in\zz$, let $\kappa>0$ be a scaling parameter, and choose initial particle positions so that $\Xt_{0}(i) = \sqrt{\kappa}\tts\Xb_{0}(i)$.
Then, in the sense of distributions \cite{gorinShkolnikov-multilevelTASEP}, 
\begin{equation}
\Xb_{t}(i) = \lim_{\kappa\to \infty} \kappa^{-1/2}(\Xt_{\kappa t}(i) -\kappa t).\label{eq:TASEPtoRBMs}
\end{equation}
This is used only for intuition in our proof; alternatively, if one added a few details (such as trace class convergence of the kernels), one could use our results to provide an alternative (but less elementary) proof of \eqref{eq:TASEPtoRBMs}.

In the next section we discuss the complete integrability of the transition probabilities of RBM, by which we mean a map from arbitrary one-sided initial data, to a formula for the $m$ point distributions at a later time (note that this excludes random initial data which we do not consider here.)  
Since RBM is a limit of TASEP, which was shown to have the same property in an earlier article \cite{fixedpt}, this is not such a surprise.
But the formulas now have a classical flavour from random matrix theory, being expressed naturally in terms of Hermite polynomials (instead of Charlier polynomials as for TASEP),
\begin{equation}\label{hermy}
H_{k}(x)=(-1)^ke^{\frac1{2}x^2}\p^ke^{-\frac1{2}x^2}.
\end{equation}

\section{Transition probabilities of RBM}

In \cite{ferrariSpohnWeiss-book}, the $m$ (spatial) point distributions of RBM and their asymptotics are computed for a few special initial conditions: Packed (converging to the Airy$_2$ process), periodic (or flat, Airy$_1$), stationary (Airy$_\uptext{stat}$), half-periodic (Airy$_{2\to1}$), half-Poisson (Airy$_{2\to\uptext{BM}}$), and periodic-Poisson (Airy$_{1\to\uptext{BM}}$).
Note also that, for special initial data, there has been a recent breakthrough in which two time distributions have been computed \cite{johansson-twotime-Br}.
We are now going to give a formula for the $m$ (spatial) point distributions of RBM for general right finite initial data, at a fixed later time $t$.
These generate the transition probabilities, in the same way that finite dimensional distributions define the Wiener measure.
 
Let $\p$ denote the derivative operator $\p f=f'$ and $\p^{-1}$ its formal inverse
\begin{equation}\label{eq:pinv}
\p^{-1}f(x)=\int_{-\infty}^x\d y\,f(y),
\end{equation}
which can be thought of as an integral operator with kernel $\p^{-1}(x,y)=\uno{x>y}$.
Also, for a fixed vector $a\in\rr^m$ and indices $n_1<\dotsc<n_m$ we introduce the operators
\begin{equation}\label{eq:defChis}
\P_a(n_j,x)=\uno{x>a_j},\qquad\bar\P_a(n_j,x)=\uno{x\leq a_j}
\end{equation}
(we will use the same notation if $a$ is a scalar, writing $\chi_a(x)=1-\bar\chi_a(x)=\uno{x>a}$).

\begin{thm}\label{thm:RBM-formula}
Consider RBM with initial condition $\{ \Xb_0(i) \}_{i=1}^{\infty}$. 
For any indices $1 \leq n_1 < n_2 < \dotsc <n_m$, any locations $a_1,\dotsc,a_m\in\rr$ and any $t > 0$, we have
\begin{equation}\label{eq:extKernelProbRBM}
\pp\big(\Xb_t(n_j) \geq a_j, j=1,\dotsc,m \big) = \det\!\left(\fI-\bar\chi_a\Kb_t\bar\chi_a\right)_{L^2(\{n_1,\dotsc,n_m\}\times\rr)},
\end{equation}
where $\det$ is the Fredholm determinant, with
\begin{multline}\label{eq:KtRBM-Hermite}
\Kb_t(n_i,z_i;n_j,z_j)=-\p^{-(n_j-n_i)}(z_i,z_j)\uno{n_i<n_j}\\
+\sum_{\ell=0}^{n_j-1}\iint_{\rr^2}\d\eta\,e^{\eta-b}\tts\pp_{B_0=\eta}(\tau=\ell,\,B_\tau\in\d b)\,\uppsi_{n_i}(t,\eta-z_i)\bar\uppsi_{n_j-\ell-1}(t,b-z_j),
\end{multline}
where $(B_k)_{k\geq0}$ is a discrete time random walk taking $\uptext{Exp}[1]$ steps to the left, $\tau$ is the hitting time of the epigraph of the curve $(\Xb_0(k+1))_{k\geq0}$ by $B_k$, i.e. $\tau=\inf\!\big\{k\geq0\!:B_k\geq\Xb_0(k+1)\big\}$, and
\begin{equation}\label{eq:uppsi}
\uppsi_n(t,x)=t^{-n/2}\ts \tfrac{1}{\sqrt{2\pi t}}e^{-\frac{x^2}{2t}} H_{n}(\tfrac{x}{\sqrt{t}}),\qquad\bar\uppsi_n(t,x)=\tfrac{1}{n!}t^{n/2}H_{n}(\tfrac{x}{\sqrt{t}}).
\end{equation}
\end{thm}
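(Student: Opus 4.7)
The plan is to adapt the biorthogonalisation strategy of \cite{fixedpt} to the Brownian setting, rather than attempt to pass \eqref{eq:TASEPtoRBMs} through the solved TASEP formula (which, as the authors remark, would require additional trace-class convergence estimates). In outline, I would first derive a preliminary Borodin-Ferrari-Pr\"ahofer-Sasamoto (BFPS) type determinantal formula with an implicitly defined kernel, and then solve the biorthogonality relations explicitly using a random-walk hitting-time ansatz.

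The first step produces an identity
\begin{equation*}
\pp\!\left(\Xb_t(n_j)\geq a_j,\,j=1,\dotsc,m\right) = \det\!\left(\fI-\bar\chi_a\ts\tKb_t\ts\bar\chi_a\right)_{L^2(\{n_1,\dotsc,n_m\}\times\rr)}
\end{equation*}
with a kernel of the BFPS form
\begin{equation*}
\tKb_t(n_i,z_i;n_j,z_j) = -\p^{-(n_j-n_i)}(z_i,z_j)\ts\uno{n_i<n_j} + \sum_{k=0}^{n_j-1}\Psit_{k}^{n_i}(z_i)\,\Phit_{k}^{n_j}(z_j),
\end{equation*}
where the $\Psit_k^n$ are explicit, built from Hermite functions via the Brownian semigroup (and identifiable with Gaussian-weighted translates of $\uppsi_{n-k}$ once the sum is unfolded into a convolution), while the $\Phit_k^n$ are characterised only implicitly by biorthogonality with $\{\Psit_\ell^n\}_\ell$ together with a boundary condition encoding the initial profile $\Xb_0$. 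Such a formula is accessible either from the Eynard-Mehta determinantal structure of RBM with packed initial data (Warren, O'Connell-Yor), extended to general right-finite initial data by a shift/coupling argument based on \eqref{eq:RBMbLPP}, or by passing the TASEP BFPS formula to the low-density limit \emph{before} biorthogonalisation, where trace-class estimates are not yet needed.

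The technical heart of the proof is to solve these biorthogonality relations. Mirroring \cite{fixedpt}, I would propose the ansatz
\begin{equation*}
\Phit_k^n(z) = \ee_{B_0=z}\!\left[e^{B_\tau-z}\,\bar\uppsi_{n-\tau-1}(t,B_\tau)\,\uno{\tau=k}\right],
\end{equation*}
with $(B_\ell)_{\ell\geq0}$ and $\tau$ as in the statement. The biorthogonality verification reduces, after conditioning on $\{\tau=k,B_\tau=b\}$ and invoking the strong Markov property, to a Laplace-transform identity for Hermite functions which follows from $\p\uppsi_n=\uppsi_{n-1}$ and the moment generating function of an $\uptext{Exp}[1]$ increment; the exponential factor $e^{B_\tau-z}$ is exactly what is required to close the resulting contour manipulations against the exponential step density. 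Substituting the solved $\Phit_k^n$ back into $\tKb_t$ and rewriting $\ee_{B_0=z_j}$ as $\iint\d\eta\,\pp_{B_0=\eta}(\cdot)$ after a translation of the Brownian variable then yields \eqref{eq:KtRBM-Hermite} in its stated form, with $\uppsi_{n_i}(t,\eta-z_i)$ emerging from the convolution with $\Psit^{n_i}$ and $\bar\uppsi_{n_j-\ell-1}(t,b-z_j)$ from the solved $\Phit^{n_j}$.

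I expect the main obstacle to be the biorthogonalisation itself: verifying that the above ansatz satisfies the constraints globally, together with the necessary decay at infinity, is a delicate Hermite-polynomial generating-function computation analogous to (but arguably cleaner than) the Charlier calculation of \cite{fixedpt}. In parallel one needs a trace-class bound for $\bar\chi_a\Kb_t\bar\chi_a$ to justify the Fredholm determinant; this should follow by factoring the kernel into two Hilbert-Schmidt operators and combining the Gaussian tails of $\uppsi_n$ with the finiteness of $\pp(\tau=\ell)$ for each $0\leq\ell\leq n_j-1$. Once these verifications are in place, \eqref{eq:extKernelProbRBM} follows.
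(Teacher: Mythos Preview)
Your overall strategy---start from a BFPS-type biorthogonal formula for RBM and then solve the biorthogonality explicitly via a hitting-time representation---is exactly the route the paper takes. However, your ansatz for $\Phit^n_k$ is incorrect, and the error is structural rather than cosmetic. The BFPS characterisation requires $\Phit^n_k$ to be a polynomial of degree at most $n-1$; your expression $\ee_{B_0=z}\!\left[e^{B_\tau-z}\bar\uppsi_{n-\tau-1}(t,B_\tau)\uno{\tau=k}\right]$ is piecewise in $z$ through the event $\{\tau=k\}$ (for instance, at $k=0$ it equals $\bar\uppsi_{n-1}(t,z)\uno{z\geq\Xb_0(1)}$, which is not a polynomial and is certainly not the constant $1$ that the true $\Phit^n_0$ must be). Consequently it cannot satisfy the biorthogonality, and the ``Laplace-transform identity for Hermite functions'' you anticipate would not close.

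The paper resolves this differently. The correct $\Phit^n_k$ is the genuine polynomial $e^{-\frac12 t\p^2}\hb^n_k(0,\cdot)$, where $\hb^n_k(\ell,\cdot)$ is defined by iterated integration up to the boundary points $\Xb_0(n-\ell)$; biorthogonality then reduces to the one-line identity $\p^k\hb^n_\ell(0,\Xb_0(n-k))=(-1)^k\uno{k=\ell}$, with no generating-function computation needed. The hitting-time representation enters \emph{not} for $\Phit^n_k$ itself but for the intermediate kernel $G_{0,n}(x_1,x_2)=\sum_k\p^{-(n-k)}\delta_{\Xb_0(n-k)}(x_1)\hb^n_k(0,x_2)$, with the exponential walk started at the \emph{first} variable $x_1$ (the $\Psi$-side, which becomes the integration variable $\eta$ after convolving with $\p^{n_i}e^{\frac12 t\p^2}$)---not at the $\Phi$-side argument $z_j$ as you propose. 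Crucially, the hitting formula for $G_{0,n}$ is first established only for $x_2<\Xb_0(n)$ and then extended to all $x_2$ by the observation that both sides are polynomials in $x_2$; this analytic-continuation step is what your ansatz misses.
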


The proof is given in Sec. \ref{sec:TASEPtoRBM}.

The RBM kernel \eqref{eq:KtRBM-Hermite} can alternatively be written in a way to harmonize with \cite{fixedpt} here, and for later convenience, it is better to conjugate the kernel by multiplying by $e^{z_j-z_i}$ (which does not change the value of the Fredholm determinant):
\begin{equation}\label{eq:KtRBM-SNSM}
\begin{aligned}
\tKb_t(n_i,z_i;n_j,z_j)&\coloneqq e^{z_j-z_i}\Kb_t(n_i,z_i;n_j,z_j)\\
&=-e^{z_j-z_i}\p^{-(n_j-n_i)}(z_i,z_j)\uno{n_i<n_j}+(\SM_{-t,-n_i})^*\SN^{\epi(\Xb_0)}_{t,n_j}(z_i,z_j)
\end{aligned}
\end{equation}
with (see also \eqref{eq:SMcontour} and \eqref{eq:SNcontour} for contour integral formulas for these kernels)
\begin{equation}
\begin{aligned}
\SM_{-t,-n}(z_1,z_2)&=e^{z_1-z_2}\uppsi_{n}(t,z_1-z_2)=e^{z_1-z_2}(\p^n e^{\frac12t\p^2})^*(z_1,z_2),\\
\SN_{-t,n}(z_1,z_2)&=e^{z_2-z_1}\uppsi_{n-1}(t,z_1-z_2)=e^{z_2-z_1}\bar\p^{(-n)}e^{-\frac12t\p^2}(z_2,z_1),\\
\SN^{\epi(\Xb_0)}_{t,n}(z_1,z_2)&=\ee_{B_0=z_1}\!\left[\SN_{-t,n-\tau}(B_\tau,z_2)\uno{\tau<n}\right],
\end{aligned}\label{eq:SMSN-Basep}
\end{equation}
and where $\bar\p^{(-k)}(x,y)=\frac{(x-y)^k}{(k-1)!}$ is an ``analytic extension'' of $\p^{-k}$ (see \eqref{eq:defpinvext}, and also Rem. \ref{rem:bckwdsHK} for the meaning of $e^{-\frac12t\p^2}$ in this context).
Note that the kernel $e^{z_2-z_1}\p^{-1}(z_1,z_2)$ making up the first term on the right hand side of \eqref{eq:KtRBM-SNSM} is precisely the transition matrix of the exponential random walk $B_k$.
These equalities are also proved in Sec. \ref{sec:TASEPtoRBM}; the basic ingredient is the classical relation between Hermite polynomials and the heat kernel,
\begin{equation}
\p^ne^{\frac12t\p^2}(x,y)=(-1)^nt^{-n/2}\tfrac{1}{\sqrt{2\pi t}}\tts e^{-\frac1{2t}(x-y)^2}H_{n}(\tfrac{x-y}{\sqrt{t}}).\label{eq:Hmt1}
\end{equation}

Note that the formulas from \cite{ferrariSpohnWeiss-book} for special deterministic initial conditions can be recovered by computing explicitly the hitting law.
This is straightforward in the case of packed initial data: here $\Xb_0(k)=0$ for each $k\geq0$, so inside the expectation in \eqref{eq:SMSN-Basep} we have $\tau=0$ if $z_1\geq0$, in which case $B_\tau=z_1$, and otherwise $\tau=\infty$; using this together with \eqref{eq:SMcontour} and \eqref{eq:SNcontour} in \eqref{eq:KtRBM-SNSM} leads directly to their formula after removing the conjugation $e^{z_j-z_i}$.
When $m=1$ this is the classic Hermite kernel, and the distribution is the same as that of the largest eigenvalue of an $n\times n$ GUE random matrix \cite{gravnerTracyWidom,baryshnikov}, a remarkable fact establishing the key link between random matrix theory and random growth models, and providing much of the motivation for the study of this particular model.
For the half-periodic case $\Xb_0(k)=-k$, $k\geq0$ (which after a limit leads also to the full periodic initial condition) it turns out to be simpler to use the biorthogonal representation of the kernel, \eqref{eq:KtTASEP} below, together with Prop. \ref{prop:RBMbiorth}; see \fpref{Ex.}{example210} for the analogous computation in the case of TASEP.

\section{From RBM to the KPZ fixed point} 

The KPZ fixed point is a Markov process on the space $\UC$ of upper semi-continuous functions $\fh\!:\rr\longrightarrow\rr\cup\{-\infty\}$ satisfying $\fh(\fx)\le A|\fx| + B$ for some $A,B<\infty$, with the topology of local Hausdorff convergence of hypographs.
It is shown in \cite{fixedpt} that it is the limit of the 1:2:3 rescaled TASEP height functions, $\fh(\ft,\fx;\fh_0)=\lim_{\ep\to0}\ep^{1/2}\!\left[h_{2\ep^{-3/2}\ft}(2\ep^{-1}\fx) + \ep^{-3/2}\ft\right]$, as long as $\fh_0(\fx)=\lim_{\ep\to0} \ep^{1/2}h_{0}(2\ep^{-1}\fx)$, all in the sense of the topology of $\UC$, in probability; here we are using $\fh(\ft,\fx;\fh_0)$ to denote the state of the Markov process at time $\ft$ given initial state  $\fh_0$.
The KPZ fixed point is conjectured to be the universal limit under such scalings for models in the KPZ universality class (see \cite{quastelCDM,cqrFixedPt,fixedpt,mqrScaling} for more background on the KPZ fixed point).
Our next theorem proves this for RBM.

Before stating the result we present the precise definition of the KPZ fixed point through its transition probabilities (for simplicity we present here the formula which uses $\fK_{\lim}$ from \fpeqref{eq:Klim} rather than the one appearing in the main results of that paper).
For $\fx\in\rr$, $\ft>0$ let
\[\fT_{-\ft,\fx}(u)= \ft^{-1/3} e^{\frac{2 \fx^3}{3\ft^2}+\frac{z\fx}{\ft} }\tsm\Ai(\ft^{-1/3}u+\ft^{-4/3}\fx^2),\]
which is the integral kernel of the operator $e^{ \fx\partial^2-\frac{\ft}3\tts\partial^3}$.
For $\fh_0\in\UC$ with $\fh_0(\fx) = -\infty$ for $\fx>0$, let
\[\fT^{\epi(-\fh_0^-)}_{-\ft,\fx}(v,u)=\ee_{\fB(0)=v}\big[\fT_{-\ft,\fx-\ftau}(\fB(\ftau),u)\uno{\ftau<\infty}\big],\]
where $\ftau$ is now defined as the hitting time of the epigraph of $-\fh_0^{-}$ by $\fB(x)$, a Brownian motion with diffusion coefficient $2$, with $\fh_0^{-} (\fx) = \fh_0(-\fx)$. 
For such initial data, the KPZ fixed point kernel reads
\begin{equation}
\Kf_\ft(n_i,\cdot;n_j,\cdot)=-e^{(\fx_j-\fx_i)\p^2}\uno{\fx_i>\fx_j}+(\fT_{-\ft,\fx_i})^*\fT^{\epi(-\fh_0^-)}_{-\ft,-\fx_j},\label{fpp}
\end{equation}
and the transition probabilities for the KPZ fixed point $\fh(\ft,\fx)$ are defined through their finite dimensional distributions, which are given by Fredholm determinants,
\begin{equation}
\pp_{\fh_0}\!\left(\fh(\ft,\fx_1)\leq \fa_1,\dotsc,\fh(\ft,\fx_m)\leq \fa_m\right)=\det\!\left(\fI-\bar\P_{-\fa}\Kf_\ft\bar\P_{-\fa}\right)_{L^2(\{\fx_1,\dotsc,\fx_m\}\times\rr)}.\label{eq:onesideext2}
\end{equation}
The process is statistically spatially invariant, so the corresponding formula for $\fh_0\in\UC$ with $\fh_0(\fx) = -\infty$ for $\fx>\fx_0$ are easily recovered.
Such data are dense in $\UC$, and it is shown in \cite{fixedpt} the probabilities are continuous functions of $\fh_0\in \UC$.
So the general formula can be obtained from \eqref{eq:onesideext2} by approximation (see \fpref{Thm.}{prop:Kfixedptconv}).

Consider a family of initial conditions $\Xb_{0}^{(\ep)}$ for RBM satisfying, for some $\fh_0\in\UC$,
\begin{equation}\label{eq:X0ep}
- \ep^{1/2}(\Xb_{0}^{(\ep)}(-2\ep^{-1}\fx)-2\ep^{-1}\fx)\xrightarrow[\ep\to0]{}\fh_0(\fx)
\end{equation} 
in distribution in $\UC$, where the left hand side is interpreted as a linear interpolation to make it a continuous function of $\fx\in \rr$.
The left hand side of \eqref{eq:X0ep} (as well as the right hand side of \eqref{eq:Xbep} below) has the interpretation as a kind of inverse function of the height function (whose definition we have not given, and will not need, here; see (2.1) and Sec. 3 in \cite{fixedpt} for the definition in the context of TASEP).
The scaling limits look at perturbations of height functions from flat, so the limit of the inverse function naturally picks up a minus sign.
Note that the convergence \eqref{eq:X0ep} requires a far more restrictive lower bound on the growth of $\Xb_{0}(-\ell)$, $\ell>0$ than the one used after \eqref{eq:RBMbLPP} to show that the minimum is achieved; it is being assumed to grow linearly.
These conditions correspond to upper bounds needed on the initial data for the KPZ fixed point to prevent blowup:
Once the initial data grows quadratically a blowup occurs in finite time (see \cite{sarkarVirag} for the precise statement).
We assume linear upper bounds on $\UC$ because it is a nice class where the solution stays and exists for all time, and trace class bounds on the kernel turn out to be very tricky if one considers the general case of quadratic growth.

Let $\Xb^{(\ep)}_t$ denote RBM with this initial data and define the 1:2:3 rescaled RBM,
\begin{equation}\label{eq:Xbep}
\fX^{(\ep)}_\ft(\fx)=\ep^{1/2}\big(\Xb_{\ep^{-3/2}\ft}^{(\ep)}(\ep^{-3/2}\ft-2\ep^{-1}\fx)+2\ep^{-3/2}\ft-2\ep^{-1}\fx\big).
\end{equation}

\begin{thm}
\label{thm:RBMtoKPZ}
Assume that the initial data satisfies \eqref{eq:X0ep}.
Then, for each $\ft>0$, in $\UC$ in distribution, 
\begin{equation}
 -\fX^{(\ep)}_\ft(\fx)\xrightarrow[\ep\to0]{}\fh(\ft,\fx;\fh_0).\label{rbmconv}
\end{equation}
\end{thm}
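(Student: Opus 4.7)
The plan is to start from the exact formula \eqref{eq:extKernelProbRBM} of Theorem~\ref{thm:RBM-formula} for the finite-dimensional distributions of RBM, carry out a saddle-point and invariance-principle asymptotic analysis of the kernel $\tKb_t$ from \eqref{eq:KtRBM-SNSM} under the $1{:}2{:}3$ scaling \eqref{eq:Xbep}, and then upgrade the resulting finite-dimensional convergence to convergence in $\UC$ via a tightness argument. Writing $n_j=\ep^{-3/2}\ft-2\ep^{-1}\fx_j$, $t=\ep^{-3/2}\ft$, and rescaling the spatial variables as $z=-\ep^{-1/2}u-2\ep^{-3/2}\ft+2\ep^{-1}\fx$ (with the Jacobian $\ep^{-1/2}$ absorbed into the kernel), the event $\{\fX^{(\ep)}_\ft(\fx_j)\geq -\fa_j\}$ in \eqref{rbmconv} becomes $\{\Xb_t(n_j)\geq a_j\}$ for suitable $a_j$, so the goal reduces to trace-class convergence on $L^2(\{\fx_1,\dotsc,\fx_m\}\times\rr)$ of the rescaled kernel to the KPZ fixed point kernel $\Kf_\ft$ of \eqref{fpp}.

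The analysis then proceeds one building block at a time, exploiting the fact that \eqref{eq:KtRBM-SNSM} already has the same three-piece structure as \eqref{fpp}. First, the exponential-walk transition $e^{z_j-z_i}\p^{-(n_j-n_i)}(z_i,z_j)\uno{n_i<n_j}$ is the $(n_j-n_i)$-step density of $B_k$; Donsker's theorem applied to the centered walk turns it under the above scaling into the heat kernel $e^{(\fx_j-\fx_i)\p^2}\uno{\fx_i>\fx_j}$, with the diffusion coefficient $2$ in \eqref{fpp} accounted for by the factor of $2$ in the index rescaling of \eqref{eq:Xbep}. Second, the free kernels $\SM_{-t,-n}$ and $\SN_{-t,n}$ from \eqref{eq:SMSN-Basep} are amenable to steepest-descent analysis through the contour integral representations \eqref{eq:SMcontour}--\eqref{eq:SNcontour}: under the $1{:}2{:}3$ scaling they have a cubic saddle, and a standard Hermite-to-Airy asymptotic expansion converts them into the Airy-type operators $\fT_{-\ft,\fx_i}$ and (the $\fx=-\fx_j$ version of) $\fT_{-\ft,\fx}$ appearing in \eqref{fpp}. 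Third, the hitting-time piece $\SN^{\epi(\Xb_0)}_{t,n_j}$ is an expectation of $\SN_{-t,n-\tau}$ over $B_k$ stopped at its hitting time $\tau$ of the epigraph of $(\Xb_0(k+1))_{k\geq 0}$; combining the previous step with a Skorokhod coupling and \eqref{eq:X0ep}, the joint weak limit of $(B_k,\Xb_0^{(\ep)}(k+1))$ is a Brownian motion $\fB$ of diffusion coefficient $2$ running against the curve $-\fh_0^-$, so by continuity of first-passage times this piece converges to $\fT^{\epi(-\fh_0^-)}_{-\ft,-\fx_j}$.

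The main obstacle is to promote these pointwise and weak limits to trace-class convergence of the full operator $\bar\chi_a\tKb_t\bar\chi_a$, which is what is required to pass to the limit inside the Fredholm determinant. This demands uniform-in-$\ep$ Gaussian-type decay bounds on the Hermite kernels away from the saddle (so that $L^2$ tails are controlled after the Jacobian is absorbed) together with tail bounds on the hitting-law density $\pp_{B_0=\eta}(\tau=\ell,B_\tau\in\d b)$ that are stable under the $\UC$ convergence of the initial data; this last point is delicate because $\fh_0$ is only upper semi-continuous, can take the value $-\infty$, and is allowed to grow linearly on the left, so the walk can travel deep in $k$ before being absorbed.

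Once finite-dimensional convergence is in hand, the upgrade to convergence in $\UC$ reduces to tightness of $\{-\fX_\ft^{(\ep)}\}$, which follows from the Brownian-type regularity inherited from the underlying Brownian motions in the Skorokhod construction of RBM together with linear upper bounds on $\fX_\ft^{(\ep)}$ that pass through the variational formula \eqref{eq:RBMbLPP}. The whole scheme closely parallels the TASEP-to-KPZ-fixed-point asymptotic analysis of \cite{fixedpt}, with Hermite polynomials and exponential random walks replacing Charlier polynomials and geometric random walks.
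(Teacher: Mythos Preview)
Your pointwise analysis of the three pieces of the kernel matches the paper's exactly, but the paper does \emph{not} attempt the direct trace-class convergence for general $\fh_0\in\UC$ that you outline in your third paragraph. Instead, the paper deliberately sidesteps that ``delicate'' step: it proves trace-class convergence only for \emph{multiple narrow wedge} initial data (Prop.~\ref{prop:multinw}), where the hitting time $\tau$ can only take finitely many values and the kernel decomposes by inclusion--exclusion into a finite sum of explicit products $(\SM_{-t,-n+\lw_{p_1}})^*\P\ts(Q_\uptext{exp})^{\cdots}\P\dotsm\P\ts\SN_{-t,n-\lw_{p_k}}$, each of which converges in Hilbert--Schmidt/operator norm by the bounds \eqref{43}. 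The extension to general $\fh_0$ and to $\UC$ convergence is then obtained \emph{not} by uniform tail bounds on the hitting law or by a separate tightness argument, but by invoking the Airy sheet limit of BLPP from \cite{dov} (Prop.~\ref{prop:RBMtoAirySheet}) together with density of multiple narrow wedges in $\UC$ and continuity of both sides of the variational formula \eqref{eq:var}.

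So your route is the self-contained one, paralleling \cite{fixedpt} for TASEP, and it should in principle go through; but the estimates you flag as the ``main obstacle'' (uniform decay of the rescaled Hermite kernels composed with the hitting law, stable under $\UC$ convergence of possibly unbounded-below, linearly-growing $\fh_0$) are genuinely nontrivial and you have not indicated how you would obtain them. The paper's route trades that analytic work for dependence on the external result of \cite{dov}, which also delivers the $\UC$-level convergence for free via the variational representation rather than via a separate tightness argument.
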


By the Markov property the above convergence extends to multitime distributions.
One could also upgrade the result to space-time convergence by employing the variational formula and the Brownian Gibbs property \cite{corwinHammond-AiryLineEns} (or alternatively from the explicit formulas as done for TASEP in \cite{fixedpt}) to show that the trajectories are H\"older $\frac12-$ in space and H\"older $\frac13-$ in time and using this to derive the necessary tightness.

The rest of this section is devoted to the proof of Theorem \ref{thm:RBMtoKPZ}.
An important ingredient will be a variational formula for the limit of $\fX^{(\ep)}_\ft$ which follows from \cite{dov}; the formula is stated and proved in Section \ref{sec:RBMtosheet} (see Prop. \ref{prop:RBMtoAirySheet}).

At the level of convergence of kernels, the proof is analogous to the proof of the convergence of the TASEP kernels in \cite{fixedpt}, but in this case uses the standard convergence of Hermite polynomials to Airy functions.
Our goal is to study the limit of the kernel $\ep^{-1/2}\tKb_t(n_i,z_i;n_j,z_j)\uno{z_i\leq-\tilde a_i,\,u_j\leq-\tilde a_j}$
defined in terms of $\tKb_t$ from \eqref{eq:KtRBM-SNSM} with 
\begin{equation}\label{eq:scaling}
t=\ep^{-3/2}\ft,\quad n_i=\ep^{-3/2}\ft-2\ep^{-1}\fx_i,\quad z_i=-2\ep^{-3/2}\ft+2\ep^{-1}\fx_i+\ep^{-1/2}u_i,\quad\eta=\ep^{-1/2}v
\end{equation}
and $\tilde a_i=-2\ep^{-3/2}\ft+2\ep^{-1}\fx_i-\ep^{-1/2}\fa_i$ (the $\ep^{-1/2}$ in front of the kernel comes from the $z_i$ change of variables).
In view of \eqref{eq:onesideext2}, our goal is to show that this kernel converges in a suitable way to $\Kf_\ft(\fx_i,u_i;\fx_j,u_j)\uno{u_i\leq-\fa_i,\,u_j\leq-\fa_j}$.
Note that the change of variables transforms the indicator functions in the desired manner.

Recall first that $e^{y-x}\p^{-1}(x,y)$ is the transition probability for the exponential random walk $B_k$.
Thus, under this scaling and for $\fx_i>\fx_j$, the first term on the right hand side of \eqref{eq:KtRBM-SNSM} becomes $-\ep^{-1/2}$ times the probability density for the walk $B_k$ to go from $2\ep^{-1}\fx_i+\ep^{-1/2}u_i$ to $2\ep^{-1}\fx_j+\ep^{-1/2}u_j$ in time $2\ep^{-1}(\fx_i-\fx_j)$. This converges to $-e^{(\fx_i-\fx_j)\p^2}(u_i,u_j)$ by the Central Limit Theorem.

Define 
\begin{align}\label{essone}
\fT^\ep_{-\ft,\fx}(v,u)&= \ep^{-1/2}e^{-\frac12t}\SM_{-t,-n}(\eta,z)= \ep^{-1/2}e^{-\frac12t}e^{\eta-z}\uppsi_{n}(t,\eta-z),\\\label{esstwo}
\bar\fT^\ep_{-\ft,-\fx}(v,u)&=\ep^{-1/2}e^{\frac12t}\SN_{-t,n}(\eta,z)=\ep^{-1/2}e^{\frac12t}e^{z-\eta}\bar\uppsi_{n}(t,\eta-z)
\end{align}
so that, after scaling, the second term on the right hand side of \eqref{eq:KtRBM-SNSM} reads $(\fT^\ep_{-\ft,\fx_i})^*\bar\fT^{\ep,\epi(-\fh_0^-)}_{-\ft,-\fx_j}$ with $\bar\fT^{\ep,\epi(-\fh_0^-)}_{-\ft,-\fx_j}(v,u)=\ep^{-1/2}\ee_{\ep^{1/2}B_0=v}\Big[\bar\fT^{\ep}_{-\ft,-\fx_j-\inv2\ep\tau}(\ep^{1/2}B_\tau,u)\uno{\tau<n}\Big]$.
Now the standard asymptotics of Hermite polynomials (see e.g. \cite[Lem. 3.7.2]{AGZ})
\begin{equation}\label{eq:oscfns}
n^{1/12}\psi_n(2\sqrt{n}+n^{-1/6}x)\longrightarrow\Ai(x), \qquad \psi_{n}(x)=(2\pi)^{-1/4}(n!)^{-1/2}e^{-\frac{1}{4}x^{2}}H_{n}(x)
\end{equation}
gives the pointwise convergence 
\begin{equation}\label{f2}
\fT^\ep_{-\ft,\fx}(v,u)\longrightarrow\fT_{-\ft,\fx}(v,u),
\qquad\bar\fT^\ep_{-\ft,-\fx}(v,u)\longrightarrow\fT_{-\ft,-\fx}(v,u).
\end{equation}
Finally, using this scaling inside the integral in \eqref{eq:KtRBM-Hermite} (or in the expectation defining $\bar\fT^{\ep,\epi(-\fh_0^-)}_{-\ft,-\fx_j}$ above) and rescaling accordingly we get that the random walk $B$ converges to the Brownian motion $\fB$ and the hitting time $\tau$ converges to the Brownian hitting time $\ftau$ to the epigraph of curve $-\fh_0^-(\fy)$, leading to $\ep^{-1/2}\tKb_t(n_i,z_i;n_j,z_j)$ converging to the right hand side of \eqref{fpp}.

This proves the limit \eqref{rbmconv} at the level of pointwise convergence of the kernels involved.
The transition probabilities are given in terms of Fredholm determinants, so to prove they in fact converge, one needs more.
In order to provide the simplest possible proof, we take the following route. 
We show that if $\fh_0$ is made up of \emph{multiple narrow wedges} (see below) and $\Xb^{(\ep)}_0$ are natural approximations of such, the convergence of kernels is in trace norm.
Since the Fredholm determinant is continuous in the trace class topology, this proves the desired convergence of probabilities in the multiple narrow wedge case. 
Together with Prop. \ref{prop:RBMtoAirySheet}, this yields the variational formula 
\[\fh(\ft,\fx;\fh_0)\stackrel{\uptext{dist}}{=}\sup_{\fy\in\rr}\big\{\aip(0,\fx;\ft,\fy)-\tfrac{(\fx-\fy)^2}\ft+\fh_0(\fy)\big\}\]
for multiple narrow wedge $\fh_0$, where $\aip(\fs,\fx;\ft,\fy)$ is the Airy sheet constructed in \cite{dov} (see also \eqref{eq:var1}).
Multiple narrow wedges are dense in $\UC$ and both sides are continuous functions of $\fh_0\in \UC$, so we conclude the variational formula holds for all $\fh_0\in \UC$.
Using Prop. \ref{prop:RBMtoAirySheet} again we therefore conclude Thm. \ref{thm:RBMtoKPZ}.
(For more background on the Fredholm determinant, including the definition and properties of the Hilbert-Schmidt and trace norms to be used below, we refer to \cite{simon} or \cite[Sec. 2]{quastelRem-review}).

We start by defining approximate multiple narrow wedges.  
Fix $\fa_\ell<\dotsm<\fa_1\leq0$ and $\fb_1,\dotsc,\fb_\ell\in\rr$ and consider RBM initial data $\Xb^{(\ep)}_0(i)=\infty$ for $i<-2\ep^{-1}\fa_1$, $\Xb^{(\ep)}_0(i)=-2\ep^{-1}\fa_k-\ep^{-1/2}\fb_k$ for $2\ep^{-1}\fa_k\leq i<-2\ep^{-1}\fa_{k+1}$, $k=1,\dotsc,\ell-1$ and $\Xb^{(\ep)}(i)=-2\ep^{-1}\fa_\ell-\ep^{-1/2}\fb_\ell$ for $i\geq-2\ep^{-1}\fa_\ell$.
Then
\[-\fX^{(\ep)}_0(\fx)\xrightarrow[\ep\to0]{}\mathfrak{d}^{\fb_1,\dotsc,\fb_\ell}_{\fa_1,\dotsc,\fa_\ell}(\fx)\]
in $\UC$, where the \emph{multiple narrow wedge} $\mathfrak{d}^{\fb_1,\dotsc,\fb_\ell}_{\fa_1,\dotsc,\fa_\ell}(\fx)$ equals $\fb_k$ if $\fx$ is one of the $\fa_k$'s and $-\infty$ for all other $\fx$.

As explained above, Thm. \ref{thm:RBMtoKPZ} follows from 

\begin{prop}\label{prop:multinw}
Let $\fh_0=\mathfrak{d}^{\fb_1,\dotsc,\fb_\ell}_{\fa_1,\dotsc,\fa_\ell}$ and let $-\fX_0^{(\ep)}$ be their approximations prescribed above.
Then \eqref{rbmconv} holds.
\end{prop}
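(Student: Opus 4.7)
The plan is to prove Prop.~\ref{prop:multinw} by separately establishing (a) convergence of the finite-dimensional distributions of $-\fX^{(\ep)}_\ft$ to those of $\fh(\ft,\cdot;\fh_0)$, and (b) tightness of the family $\{-\fX^{(\ep)}_\ft\}_{\ep>0}$ in $\UC$. Joined together these give the distributional convergence in $\UC$ claimed in \eqref{rbmconv}.

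For (a), the heuristic derivation preceding the proposition already identifies the pointwise limit of $\ep^{-1/2}\tKb_t$ under the change of variables \eqref{eq:scaling}. The task is to upgrade this to trace norm convergence of $\bar\chi_{\tilde a}\tKb_t\bar\chi_{\tilde a}$ as an operator on $L^2(\{n_1,\dots,n_m\}\times\rr)$ towards $\bar\chi_{-\fa}\Kf_\ft\bar\chi_{-\fa}$ on $L^2(\{\fx_1,\dots,\fx_m\}\times\rr)$; continuity of the Fredholm determinant on the trace class will then deliver the limit of \eqref{eq:extKernelProbRBM}. I would treat the two summands of \eqref{eq:KtRBM-SNSM} separately: the transport term is (up to sign) the rescaled transition density of the exponential random walk, and its convergence to $-e^{(\fx_i-\fx_j)\p^2}$ in trace norm on the cutoff region follows from a quantitative local CLT combined with Gaussian tail bounds on sums of $\text{Exp}[1]$ variables. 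For the second summand $(\SM_{-t,-n_i})^*\SN^{\epi(\Xb_0)}_{t,n_j}$, I would factor its rescaled and cutoff version as a composition of the two operators $\bar\chi_{\tilde a_i}(\fT^\ep_{-\ft,\fx_i})^*$ and $\bar\fT^{\ep,\epi(-\fh_0^-)}_{-\ft,-\fx_j}\bar\chi_{\tilde a_j}$, so that Hilbert--Schmidt convergence of each factor to its natural limit delivers trace norm convergence of the product via $\|AB\|_1\le\|A\|_2\|B\|_2$.

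The crucial simplification in the multiple narrow wedge case is that $\Xb^{(\ep)}_0$ is piecewise constant with finitely many plateaus (one per wedge tip of $\mathfrak{d}_\fa^\fb$), so the hitting time $\tau$ of the exponential random walk $B_k$ is the first passage above one of finitely many explicit horizontal rays. Consequently the expectation defining $\SN^{\epi(\Xb_0)}_{t,n_j}$ decomposes into a finite sum indexed by which plateau is hit first, each summand being a convolution of an explicit Hermite-type kernel $\SN$ with the discrete hitting distribution of a single ray, which converges under \eqref{eq:scaling} to the Brownian first-passage law above $-\fh_0^-$ by Donsker's invariance principle. The principal obstacle, and the step I expect to require the most care, is producing the uniform-in-$\ep$ dominating bounds needed to promote the pointwise convergences \eqref{f2} to Hilbert--Schmidt convergence of $\fT^\ep_{-\ft,\fx}$ and $\bar\fT^\ep_{-\ft,-\fx}$ on the cutoff region $\{u\leq-\fa_i\}$, and to push the convergence through the finite-sum decomposition of the hitting-time expectation. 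The natural route is steepest descent applied to the contour integral representations \eqref{eq:SMcontour}--\eqref{eq:SNcontour} of $\SM$ and $\SN$ near the critical point $w=0$, extracting a Gaussian factor in $v$ and an Airy-type decay in $u$; these estimates run in parallel to those carried out for the analogous TASEP kernels in \cite{fixedpt}, with Hermite replacing Charlier polynomial asymptotics via \eqref{eq:oscfns}.

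For (b) I would exploit the variational formula \eqref{eq:RBMbLPP}: since the approximating initial data $\Xb^{(\ep)}_0$ attains only finitely many finite values, the profile $\fX^{(\ep)}_\ft(\fx)$ is the minimum of finitely many shifted and rescaled BLPP last passage times $\Gb[(0,\ell_k)\to(\ep^{-3/2}\ft,\ep^{-3/2}\ft-2\ep^{-1}\fx)]$. Tightness of each of these in the local Hausdorff topology on hypographs follows from the modulus of continuity estimates for BLPP developed in \cite{dov}, while the $2\sqrt{(n-\ell)t}$ growth bound on $\Gb$ recalled after \eqref{eq:RBMbLPP} prevents escape to $-\infty$ on compacts as $|\fx|\to\infty$; taking a minimum of finitely many tight profiles preserves tightness in $\UC$. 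Combining the trace norm convergence of Fredholm determinants from (a) with this tightness yields \eqref{rbmconv} in $\UC$ for $\fh_0=\mathfrak{d}_\fa^\fb$, which is exactly Prop.~\ref{prop:multinw}.
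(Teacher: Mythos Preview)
Your outline is close to the paper's proof in spirit, and would work, but it is organized somewhat differently and overcomplicates a couple of steps.

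For part (a), the paper exploits the same structural observation you do---that for these piecewise-constant initial data the walk $B_k$ can only hit the epigraph at the \emph{start} of a plateau, i.e.\ at one of the finitely many times $\lw_1<\dotsm<\lw_\ell$---but instead of decomposing by ``first plateau hit'' and invoking Donsker, it writes $\SN^{\epi(\Xb_0)}_{t,n}$ via inclusion--exclusion over the events $\{B_{\lw_p}>c_p\}$ as an alternating sum of completely explicit products $(Q_\uptext{exp})^{\lw_{p_1}}\P_{c_{p_1}}(Q_\uptext{exp})^{\lw_{p_2}-\lw_{p_1}}\P_{c_{p_2}}\dotsm\P_{c_{p_k}}\SN_{-t,n-\lw_{p_k}}$. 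Each inner factor $(Q_\uptext{exp})^m$ then converges in operator norm to a heat kernel by the ordinary CLT (no functional invariance principle is needed), and the outer factors converge in Hilbert--Schmidt norm via the Hermite asymptotics and the uniform bounds \eqref{43} (which the paper imports from \cite{ferrariSpohnWeiss-book} rather than redoing a steepest-descent analysis). Your first-hit decomposition yields an equally explicit product with $\bar\P$'s in place of some $\P$'s, so the two routes are interchangeable; but the reference to Donsker and ``discrete hitting distribution of a single ray'' obscures that everything is already a finite product of elementary kernels. One genuine point you gloss over: the transport term $e^{(\fx_i-\fx_j)\p^2}$ is \emph{not} trace class on the half-line, so your claim that it converges in trace norm directly from a local CLT is false as stated; an extra conjugation by a multiplication operator is required (the paper flags this and refers to \fpref{Rem.}{rem:rwtoBMconjug}).

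For part (b), the paper does not carry out a separate tightness argument at all: its proof of Prop.~\ref{prop:multinw} establishes only trace-norm (hence finite-dimensional) convergence, and the upgrade to convergence in $\UC$ is obtained afterwards by combining this with Prop.~\ref{prop:RBMtoAirySheet}, which already delivers $\UC$-convergence to the Airy-sheet variational expression via \cite{dov}. Your proposed tightness argument also rests on \cite{dov}, so it packages the same external input differently; it is correct but redundant within the paper's overall architecture.
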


\begin{proof}
We need to prove the convergence of the scaled RBM kernel in trace norm.
For simplicity we will prove this only for the kernel corresponding to one-point distributions, i.e. the kernel $\tKb[(n),]_t$ coming from \eqref{eq:KtRBM-SNSM} (after scaling) with $n_i=n_j=n$; at the end of the proof we will explain how the proof extends to the general case.
Additionally, for notational simplicity we will take all $\fb_k$'s to be $0$; the extension to general $\fb_k$'s is straightforward, as will be clear from the proof.
We will write $\lw_k=\lceil-2\ep^{-1}\fa_k\rceil$.

Consider first the single narrow wedge case, $\ell=1$, for which $\Xb^{(\ep)}_0(i)=\infty$ for $i<-2\ep^{-1}\fa$ and $\Xb^{(\ep)}_0(i)=2\ep^{-1}\fa$ for $i\geq-2\ep^{-1}\fa$ (here $\fa=\fa_1$).
Since the walk $B_k$ takes strictly negative steps, $\tau<n$ if and only $n>\lw$ and $B_{\lw}>2\ep^{-1}\fa$, in which case $\tau=\lw=\lceil-2\ep^{-1}\fa\rceil$.
Then, recalling again that the transition matrix of the walk $B_k$ is $Q_\uptext{exp}(x,y)\coloneqq e^{y-x}\p^{-1}(x,y)$, we have $\SN_{-t,n}^{\epi(\Xb_0)}=(Q_\uptext{exp})^{\lw}\P_{2\ep^{-1}\fa}\SN_{-t,n-\lw}\uno{\lw<n}$, and thus (using $(\SM_{-t,-n})^*(Q_\uptext{exp})^\lw=(\SM_{-t,-n+\lw})^*$, which follows from \eqref{eq:SMSN-Basep})
\begin{equation}
\tKb[(n),]_t=(\SM_{-t,-n+\lw})^*\P_{2\ep^{-1}\fa}\SN_{-t,n-\lw_1}\uno{\lw_1<n}.\label{eq:tKb}
\end{equation}
From \eqref{eq:Hmt1} and the formula $e^{\frac1{2t}\p^2}\tsm(z)=\frac1{2\pi\I}\int_{\I\rr+\delta}dw\ts e^{tw^2/2-wz}$, valid for any $\delta\in\rr$, we get
\begin{equation}\label{eq:SMcontour}
\SM_{-t,-n}(z_1,z_2)=\frac{1}{2\pi\I}\int_{\I\rr+\delta}\d w\,w^n\tts e^{\frac12tw^2+(1-w)(z_1-z_2)},
\end{equation}
while using the representation $H_n(x)=\frac{n!}{2\pi\I}\int_{\gamma_0}\d w\,\frac{e^{-w^2/2+wx}}{w^{n+1}}$ we get
\begin{equation}\label{eq:SNcontour}
\SN_{-t,n}(z_1,z_2)=\frac{1}{2\pi\I}\int_{\gamma_0}\d w\,\frac{1}{w^{n+1}}e^{-\frac12tw^2+(w-1)(z_1-z_2)},
\end{equation}
where $\gamma_0$ is any positively oriented contour around the origin.
From \cite[Lemmas 5.12 and 5.13]{ferrariSpohnWeiss-book} we have, for each fixed $\ft,\fx$, that: 1.
The limits in \eqref{f2} hold uniformly on compact sets of $v,u$; 2. For each $\fa\in \rr$ there is a $C_\fa<\infty$
such that for $u>\fa$,
\begin{equation}\label{43}
|\fT^\ep_{-\ft,\fx}(u)|\le C_\fa\tts e^{-u}\qquad |\bar\fT^\ep_{-\ft,-\fx}(u)|\le C_\fa\tts e^{-u},
\end{equation}
where the operators $\fT^\ep_{-\ft,\fx}$, $\bar\fT^\ep_{-\ft,-\fx}$ from \eqref{f2} are written as functions of one variable given that they are convolution operators.
The scaled version of $\tKb[(n),]_t$ is $(\fT^\ep_{-\ft,\fx-\fa})^*\P_{0}\bar\fT^\ep_{-\ft,-\fx+\fa}$, which we need to consider acting on $L^2((-\infty,-\fa])$; we may split then the resulting operator as the product of two factors, $\bar\P_{-\fa}(\fT^\ep_{-\ft,\fx})^*\P_{0}$ and $\P_{0}\bar\fT^\ep_{-\ft,-\fx}\bar\P_{-\fa}$, whose Hilbert-Schmidt norms
\begin{equation}
\textstyle\|\bar\P_{-\fa}(\fT^\ep_{-\ft,\fx})^*\P_{0}\|_2^2=\int_{0}^\infty\!\d v \int_{-\infty}^\fa\!\d u\,\fT^\ep_{-\ft,\fx}(v,u)^2,~~\|\P_{0}\bar\fT^\ep_{-\ft,-\fx}\bar\P_{-\fa}\|_2^2=\int_{0}^\infty\!\d v\int_{-\infty}^\fa\!\d u\,\bar\fT^\ep_{-\ft,-\fx}(v,u)^2
\end{equation}\\[-20pt]
are bounded independent of $\ep$ by \eqref{43}.
Since the trace norm of a product is bounded by the product of the Hilbert-Schmidt norms, this provides a uniform (in $\ep$) bound on the trace norm of $(\fT^\ep_{-\ft,\fx})^*\P_{0}\bar\fT^\ep_{-\ft,-\fx}$ on $L^2((-\infty,-\fa])$. 
Now if we want to prove that $(\fT^\ep_{-\ft,\fx-\fa})^*\P_{0}\bar\fT^\ep_{-\ft,-\fx+\fa}$ converges to $(\fT_{-\ft,\fx-\fa})^*\P_{0}\bar\fT_{-\ft,-\fx+\fa}$ in trace norm in this space we can control the trace norm of the difference of the two sides by the sum of $\|\bar\P_{-\fa}(\fT^\ep_{-\ft,\fx})^*\P_{0}-\bar\P_{-\fa}(\fT_{-\ft,\fx})^*\P_{0}\|_2\|\P_{0}\bar\fT^\ep_{-\ft,-\fx}\bar\P_{-\fa}\|_2$ and $\|\bar\P_{-\fa}(\fT_{-\ft,\fx})^*\P_{0}\|_2 \|\P_{0}\bar\fT^\ep_{-\ft,-\fx}\bar\P_{-\fa}- \P_{0}\fT_{-\ft,-\fx}\bar\P_{-\fa}\|_2$, both of which vanish as $\ep\to 0$ by a simple truncation argument using the uniform convergence on compact sets together with \eqref{43} and the analog estimate for $|\fT_{-\ft,\fx}(u)|$ (see e.g. \fpeqref{eq:Stx-bd}).

Now we consider a general multiple narrow wedge.
The same argument as above shows that if the epigraph of $\Xb_0^{(\ep)}$ is hit, then it has to be hit at the beginning of one of the blocks of packed particles, and then by inclusion-exclusion one gets (recall $\lw_1<\dotsm<\lw_\ell$)
\begin{equation}
\begin{aligned}
\SN_{-t,n}^{\epi(\Xb_0)}
&=\sum_{k=1}^{\ell}(-1)^{k+1}\sum_{1\leq p_1\leq\dotsm\leq p_k\leq\ell}(Q_\uptext{exp})^{\lw_{p_1}}\P_{2\ep^{-1}\fa_{p_1}}(Q_\uptext{exp})^{\lw_{p_2}-\lw_{p_1}}\P_{2\ep^{-1}\fa_{p_2}}\dotsm\\
&\hspace{2.25in}\dotsm (Q_\uptext{exp})^{\lw_{p_k}-\lw_{p_{k-1}}}\P_{2\ep^{-1}\fa_{p_k}}\SN_{-t,n-\lw_{p_k}}\uno{\lw_{p_k}<n}.
\end{aligned}\label{eq:incl-excl}
\end{equation}
Therefore $\tKb[(n),]_t$ can be expressed as $\sum_{k=1}^{\ell}(-1)^{k+1}\sum_{1\leq p_1\leq\dotsm\leq p_k\leq\ell}K_{p_1,\dotsc,p_k}\ts\uno{\lw_{p_k}<n}$
with
\begin{equation}
K_{p_1,\dotsc,p_k}=(\SM_{-t,-n+\lw_{p_1}})^*\P_{2\ep^{-1}\fa_{p_1}}(Q_\uptext{exp})^{\lw_{p_2}-\lw_{p_1}}\P_{2\ep^{-1}\fa_{p_2}}\dotsm (Q_\uptext{exp})^{\lw_{p_k}-\lw_{p_{k-1}}}\P_{2\ep^{-1}\fa_{p_k}}\SN_{-t,n-\lw_{p_k}}.
\end{equation}
From the arguments used in the single narrow wedge case we know that $(\SM_{-t,-n+\lw_{p_1}})^*\P_{2\ep^{-1}\fa_{p_1}}$ and $\P_{2\ep^{-1}\fa_{p_k}}\SN_{-t,n-\lw_{p_k}}$ converge respectively, after properly scaling and truncating to $L^2((-\infty,-\fa])$, to $(\fT_{-\ft,\fx-\fa_{p_1}})^*\P_0$ and $\P_0\fT_{-\ft,-\fx+\fa_{p_k}}$, with the convergence holding in Hilbert-Schmidt norm.
The inner factor $\P_{2\ep^{-1}\fa_{p_1}}(Q_\uptext{exp})^{\lw_{p_2}-\lw_{p_1}}\P_{2\ep^{-1}\fa_{p_2}}\dotsm \P_{2\ep^{-1}\fa_{p_{k-1}}}(Q_\uptext{exp})^{\lw_{p_k}-\lw_{p_{k-1}}}\P_{2\ep^{-1}\fa_{p_k}}$, on the other hand, converges to $\P_{0}e^{(\fa_1-\fa_2)\p^2}\P_0\dotsm\P_0e^{(\fa_{k-1}-\fa_k)\p^2}\P_0$ under our scaling by the Central Limit Theorem, and it is not hard to see that the convergence holds in operator norm.
Recall that the trace norm satisfies the inequality $\|AK\|_1\leq\|A\|_\uptext{op}\|K\|_1$ with respect to the operator norm.
Thus using the same argument as for the single narrow wedge case, the whole product converges in trace norm to $(\fT_{-\ft,\fx-\fa_{p_1}})^*\P_{0}e^{(\fa_1-\fa_2)\p^2}\P_0\dotsm\P_0e^{(\fa_{k-1}-\fa_k)\p^2}\P_0\fT_{-\ft,-\fx+\fa_{p_k}}$.
Summing as on the right hand side of \eqref{eq:incl-excl} and using inclusion exclusion in the opposite direction then leads directly to $\Kf_\ft$ for $\fh_0=\mathfrak{d}^{0,\dotsc,0}_{a_1,\dotsc,\fa_\ell}$, since $\fB$ can only hit the epigraph of this function if it is positive at one of the points $\fa_1,\dotsc,\fa_\ell$.

We have proved the desired convergence of the kernel in \eqref{eq:KtRBM-SNSM} in the one-point case corresponding to $n_i=n_j=n$.
To prove the convergence in trace norm of the extended kernel it suffices to prove the convergence of  each of its entries (i.e. of the right hand side of \eqref{eq:KtRBM-SNSM} for general $n_i$ and $n_j$).
The kernel corresponding to $n_i,n_j$ has two terms.
The convergence of the second one follows from exactly the same arguments as above (the only difference is that in the decomposition \eqref{eq:tKb} the two pieces depend on different values of $n$, but this is not a problem since they where estimated separately anyway).
In order to upgrade to trace norm the convergence of the first term (for $n_i<n_j$), an additional conjugation by a multiplication operator is needed, but this conjugation does not affect the convergence of the other term; the argument is essentially the same as in \cite{fixedpt} (see Rem. \fprefst{rem:rwtoBMconjug} there), where the same conjugation is employed in the proof of the convergence of the TASEP kernels.
\end{proof}

\section{From RBM to the Airy sheet variational formula}\label{sec:RBMtosheet}

By coupling copies of TASEP starting with different initial conditions, and using compactness, the \emph{Airy sheet}
\begin{equation}
\aip(\fx,\fy)=\fh(1,\fy;\mathfrak{d}_\fx)+(\fx-\fy)^2 
\end{equation} 
is defined in \cite{fixedpt} as a two parameter process, where we have started the KPZ fixed point with the $\UC$ function $\mathfrak{d}_\fx(\fx) = 0$, $\mathfrak{d}_\fx(\fu) = -\infty$ for $\fu\neq\fx$; the {\em narrow wedge at $\fx$}.
Applying the preservation of maximum property of the KPZ fixed point, $\fh(\ft,\fx;\ff_1\vee\ff_2)\stackrel{\uptext{dist}}{=}\fh(\ft,\fx;\ff_1)\vee\fh(\ft,\fx;\ff_2)$, repeatedly to multiple narrow wedges and taking limits (see \cite[Sec. 4]{fixedpt}) leads to the variational formula
\begin{equation}\label{eq:var}
\fh(\ft,\fx;\fh_0)  \stackrel{\uptext{dist}}{=}  \sup_{\fy\in\rr}\big\{ \ft^{1/3}\aip(\ft^{-2/3} \fx,\ft^{-2/3} \fy)- \tfrac1{\ft}(\fx-\fy)^2 + \fh_0(\fy)\big\}.
\end{equation} 
The equality is in distribution, as functions of $\fx$, for fixed $\ft$.

More generally, one can consider the \emph{space-time Airy sheet} or \emph{directed landscape} \cite{cqrFixedPt,dov},
\begin{equation}
\aip(\fs,\fx,\ft,\fy)=  \fh(\ft,\fy;\fs, \mathfrak{d}_\fx)+ \tfrac{(\fx-\fy)^2 }{\ft-\fs},
\end{equation}
where $\fh(\ft,\fy;\fs, \bar\fh)$ denotes the KPZ fixed point at time $\ft$ starting at $\bar\fh$ at time $\fs$. 
This gives, for any $\fs\geq0$ and any $\bar\fh\in\UC$,
\begin{equation}\label{eq:var1}
\fh(\ft,\fx;\fs,\bar\fh)\stackrel{\uptext{dist}}{=}\sup_{\fy\in\rr}\big\{\aip(\fs,\fx;\ft,\fy)-\tfrac{(\fx-\fy)^2}{\ft-\fs}+\bar\fh(\fy)\big\},
\end{equation}
with equality in distribution in the space of continuous functions of $\ft\in [\fs, \infty)$ into $\UC$ (or, if one prefers,  in the space of continuous functions of $\ft\in (\fs, \infty)$ into H\"older $\tfrac12-$ functions.)
The variational formula \eqref{eq:var1} follows from \eqref{eq:var} by the Markov property and scaling invariance of the KPZ fixed point together with the semigroup property of the Airy sheet \cite[Thm. 4.18]{fixedpt}: If $\aip^1$ and $\aip^2$ are independent copies and $\ft_1+\ft_2=\ft$ are all positive, then for $\hat\aip^i(\fx,\fy)=\aip^i(\fx,\fy)-(\fx-\fy)^2$ one has $\sup_\fz\left\{ \ft_1^{1/3}\hat\aip^1(\ft_1^{-2/3} \fx,\ft_1^{-2/3} \fz) +  \ft_2^{1/3}\hat\aip^2(\ft_2^{-2/3} \fz,\ft_2^{-2/3} \fy) \right\} \stackrel{\uptext{dist}}{=} \ft^{1/3}\hat\aip^1(\ft^{-2/3} \fx,\ft^{-2/3} \fy)$ (this is called \emph{metric composition} in \cite{dov}).

The disadvantage of resorting to compactness arguments is that they cannot tell one that the limiting object is  defined uniquely and all one can conclude is that the variational expressions \eqref{eq:var} and \eqref{eq:var1} hold for any limit point.
In \cite{dov} this is overcome by showing that the sheet is a (non-explicit) functional of the Airy line ensemble \cite{corwinHammond-AiryLineEns}.
Since the Airy line ensemble is defined uniquely as a determinantal point process, the uniqueness of the Airy sheet follows as a consequence. 
We stress that, at this stage, nothing ensures that the Airy sheets coming from the two constructions in \cite{fixedpt} and \cite{dov} are the same object.
The equality, however, will be a consequence of our main result, Cor. \ref{cor:KPZfixedptAirySheet}.

The following result follows from the construction of the Airy sheet in \cite{dov} together with the relation between RBMs and Brownian last passage percolation.

\begin{prop}\label{prop:RBMtoAirySheet}
In the setting of Thm. \ref{thm:RBMtoKPZ}, assume now that $-\fX^{(\ep)}_0(\fx)\longrightarrow\fh_0(\fx)$ for some finitely supported $\fh_0\in\UC$, meaning that $\fh_0(\fx)=-\infty$ for $\fx$ outside some finite interval.
Then for each $\ft>0$,
\begin{equation}\label{eq:RMBtoAiryS}
-\fX^{(\ep)}_\ft(\fx)\xrightarrow[\ep\to0]{}\sup_{\fy\in\rr}\big\{\aip(0,\fx;\ft,\fy)-\tfrac{(\fx-\fy)^2}{\ft}+\fh_0(\fy)\big\}
\end{equation}
in distribution (locally) in $\UC$, where $\aip(\fs,\fx;\ft,\fy)$ stands for the the Airy sheet constructed in \cite{dov}.
\end{prop}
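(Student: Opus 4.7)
The plan is to combine the variational duality \eqref{eq:RBMbLPP}---which expresses RBM as a minimum of Brownian last passage percolation times---with the BLPP-to-directed-landscape convergence of \cite{dov}, and to exploit the finite support of $\fh_0$ to pass to the limit inside the maximum.

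First I would extend $\Xb_0^{(\ep)}(\ell)=+\infty$ for $\ell\le 0$ and negate \eqref{eq:RBMbLPP}, giving $-\fX^{(\ep)}_\ft(\fx)=\max_\ell \ep^{1/2}\big(\Gb[(0,\ell)\to(t_\ep,n_\ep)]-\Xb_0^{(\ep)}(\ell)-t_\ep-n_\ep\big)$ with $t_\ep=\ep^{-3/2}\ft$ and $n_\ep=\ep^{-3/2}\ft-2\ep^{-1}\fx$. Under the substitution $\ell=2\ep^{-1}\fy$, the quantity inside the maximum splits into three summands,
\begin{equation*}
\ep^{1/2}\big(-\Xb_0^{(\ep)}(\ell)-\ell\big)+\ep^{1/2}\big(\Gb[(0,\ell)\to(t_\ep,n_\ep)]-2\sqrt{t_\ep(n_\ep-\ell)}\big)-\ep^{1/2}\big(\sqrt{t_\ep}-\sqrt{n_\ep-\ell}\big)^{\!2}.
\end{equation*}
The first converges to $\fh_0(-\fy)$ by the initial-data hypothesis; the second, by \cite{dov}, converges to $\aip(0,\fx;\ft,-\fy)$ (once the sign conventions there are matched with the present parameterization); and the third, via a Taylor expansion of $\sqrt{n_\ep-\ell}=\sqrt{t_\ep-2\ep^{-1}(\fx+\fy)}$ around $\sqrt{t_\ep}$, tends to $-(\fx+\fy)^{2}/\ft$. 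Summing the three limits and then substituting $\fy\mapsto-\fy$ in the resulting supremum reproduces the right-hand side of \eqref{eq:RMBtoAiryS}.

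To pass the limit through the maximum I would use that $\fh_0$ is finitely supported: the initial-data hypothesis forces $\ep^{1/2}(\Xb_0^{(\ep)}(\ell)+\ell)\to+\infty$ (so the first summand tends to $-\infty$) for $\fy$ outside a fixed compact interval, and hence the argmax is confined there uniformly in $\ep$. Inside this compact region, the three convergences above hold jointly and locally uniformly in $(\fx,\fy)$---the first by the assumption, the second by the locally-uniform-on-compacts version of the BLPP convergence in \cite{dov}, the third trivially---and standard upper-semicontinuous optimization transfers this to locally uniform convergence of the maximum as a function of $\fx$, yielding local convergence in $\UC$.

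The principal technical obstacle is the invocation of \cite{dov}: one must carefully translate the scaling and sign conventions of the directed landscape used there into the present centering by $2\sqrt{t_\ep(n_\ep-\ell)}$, and verify joint local-uniform convergence in both spatial parameters $\fx,\fy$ (rather than just pointwise or marginal convergence). Once this translation is in place, combining the three summands under a single maximum and using the finite support of $\fh_0$ to localize the domain is routine.
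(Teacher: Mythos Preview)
Your proposal is correct and follows essentially the same route as the paper's proof: both negate the variational duality \eqref{eq:RBMbLPP}, rescale, localize the optimization to a compact $\fy$-interval via the finite support of $\fh_0$, and then invoke the locally uniform convergence from \cite[Thm.~1.5]{dov}. The only difference is bookkeeping: instead of your three-term split with square-root centering $2\sqrt{t_\ep(n_\ep-\ell)}$ and a Taylor expansion to extract the parabola, the paper (working at $\ft=1$) introduces auxiliary time parameters $\fs=-2\ep^{1/2}\fy$ and $\fs'=1-2\ep^{1/2}\fx$ so that the last-passage term lands directly in the four-parameter form required by \cite{dov}, with the parabola then absorbed into the directed landscape limit rather than separated out by hand.
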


\begin{proof}
Thanks to the 1:2:3 scaling invariance of the KPZ fixed point, we may assume without loss of generality that $\ft=1$.
Additionally, by spatial invariance we may assume without loss of generality that the support of $\fh_0$ is contained in $(-\infty,0]$, so that we can work in the setting of RBM with a first particle $\Xb_t(1)$ as in the previous section.
Consider the scaling $t=\ep^{-3/2}$, $n=\ep^{-3/2}-2\ep^{-1}\fx$, so that $\fX^{(\ep)}_1(\fx)=\ep^{1/2}\big(\Xb^{(\ep)}_t(n)-t-n)$ and \eqref{eq:RBMbLPP} yield
\begin{align}
-\fX^{(\ep)}_1(\fx)&=\max_{1\leq\ell\leq n}\!\Big\{\ep^{1/2}\big(\!-\tsm\Xb^{(\ep)}_0(\ell)+\ell\big)+\ep^{1/2}\big(G[(0,\ell)\to(t,n)]+t+(n-\ell)\big)\Big\}.
\end{align}
Changing variables $\ell\mapsto-2\ep^{-1}\fy$, and in view of \eqref{eq:Xbep}, this becomes
\begin{multline}
\max_{\fy\in[\fx-\frac12\ep^{-1/2},\frac12\ep]}\!\Big\{-\tsm\fX^{(\ep)}_0(-2\fy)+\ep^{1/2}G[(0,-2\ep^{-1}\fy)\to(\ep^{-3/2},\ep^{-3/2}-2\ep^{-1}\fx)]\\
+2\ep^{-1}-2\ep^{-1/2}(\fx-\fy)\Big\}.
\end{multline}
$\fX^{(\ep)}_0(-2\fy)$ converges in distribution, locally in $\UC$, to $-\fh_0(\fy)$, and since $\fh_0(\fy)=-\infty$ for $\fy$ outside some finite interval $[\fa,\fb]$, then as $\ep\to 0$ we obtain the same thing by optimising over the set $[\fa-1,\fb+1]$.
Let $\fs=-2\ep^{1/2}\fy$ and $\fs'=1-2\ep^{1/2}\fx$, to get
\begin{multline}
\max_{\fy\in[\fa-1,\fb+1]}\!\Big\{\!-\tsm\fX^{(\ep)}_0(-2\fy)+\ep^{1/2}G[(\ep^{-3/2}\fs+2\ep^{-1}\fy,\ep^{-3/2}\fs)\to(\ep^{-3/2}\fs'+2\ep^{-1}\fx,\ep^{-3/2}\fs')]\\
+2\ep^{-1}-2\ep^{-1/2}(\fx-\fy)\Big\}.
\end{multline}
The variables $\fs,\fs',\fy$ vary over a compact set as $\ep\to0$, so assuming that $\fx$ also does, using the uniformity of the convergence in \cite[Thm. 1.5]{dov}, we deduce \eqref{eq:RMBtoAiryS} in distribution, locally in $\UC$.
\end{proof}

We can now fill in the gap between \cite{dov} and \cite{fixedpt} that results from the limiting objects in \cite{fixedpt} having been defined as limits from TASEP and those in \cite{dov} having been derived from BLPP.
Since the KPZ fixed point is defined through its transition probabilities, there is no ambiguity in its definition.
So Prop. \ref{prop:multinw} shows that the right hand side of \eqref{eq:RMBtoAiryS} is given by the KPZ fixed point $\fh(\ft,\fx;\fh_0)$ for multiple narrow wedge data (defined before Prop. \ref{prop:multinw}).
Since such data are dense in $\UC$, and both sides of \eqref{eq:var1} are continuous on $\UC$, we have that \eqref{eq:var1} holds for all $\fh_0\in \UC$.
At the same time, a second group \cite{dnv-scalinglongest} are filling the gap from the other side, proving that the result in \cite{dov} can also be obtained from exponential last passage percolation, which is in variational duality with TASEP.
Since the Airy sheet is obtained as the same functional of the Airy line ensemble, which is unique, again there is no ambigiuty, and TASEP converges to the unique Airy sheet of \cite{dov}. 
Either route leads to the main result:

\begin{cor}\label{cor:KPZfixedptAirySheet}
The KPZ fixed point Markov process constructed in \cite{fixedpt} and the (unique) Airy sheet/directed landscape constructed in \cite{dov} are related by the  variational formula \eqref{eq:var1}.
\end{cor}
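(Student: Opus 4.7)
The plan is to read off the corollary from the two convergence theorems of the preceding sections and then extend by density. For multiple narrow wedge initial data $\fh_0 = \mathfrak{d}_{\fa_1,\dotsc,\fa_\ell}$ and the approximations $\Xb_0^{(\ep)}$ constructed just before Proposition \ref{prop:multinw}, that proposition identifies the distributional limit of $-\fX^{(\ep)}_\ft(\fx)$ as the KPZ fixed point $\fh(\ft,\fx;\fh_0)$. Since multiple narrow wedges are finitely supported, Proposition \ref{prop:RBMtoAirySheet} applies to the same sequence and identifies the same limit instead as the variational expression $\sup_{\fy\in\rr}\{\aip(0,\fx;\ft,\fy)-(\fx-\fy)^2/\ft+\fh_0(\fy)\}$. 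Equating the two distributional limits yields \eqref{eq:var1} at $\fs=0$ for every multiple narrow wedge.

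The second step is to promote this equality from the dense set of multiple narrow wedges to all of $\UC$. Both sides of \eqref{eq:var1} depend continuously on $\fh_0 \in \UC$: the left side by the $\UC$-continuity of the KPZ fixed point transition probabilities established in \cite{fixedpt}, and the right side because the Airy sheet from \cite{dov} is continuous in its arguments with controlled tails, which combined with the linear envelope $\fh_0(\fx) \le A|\fx|+B$ built into $\UC$ localizes the supremum to a bounded region that is stable under $\UC$-approximation. Density of multiple narrow wedges in $\UC$ then extends the identity to arbitrary $\fh_0 \in \UC$.

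The extension from $\fs = 0$ to arbitrary $\fs \geq 0$ is essentially free: both sides of \eqref{eq:var1} are expressible in terms of time differences, and using the Markov property of the KPZ fixed point together with the time-shift stationarity of the space-time Airy sheet reduces the general statement to the case $\fs=0$. The alternative route through \cite{dnv-scalinglongest} is available as independent confirmation but is not needed in this argument.

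I expect the main obstacle to lie in the $\UC$-continuity of the right side of \eqref{eq:var1}: the topology of local Hausdorff convergence of hypographs is relatively weak, and one must ensure that the supremum in the variational formula passes to the limit under such approximation of $\fh_0$. The growth bounds on the Airy sheet from \cite{dov}, together with the linear upper bound on $\fh_0$ defining $\UC$, should provide the localization needed to prevent any loss in the supremum coming from the tails.
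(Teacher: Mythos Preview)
Your proposal is correct and follows essentially the same route as the paper: equate the two distributional limits of rescaled RBM coming from Proposition \ref{prop:multinw} and Proposition \ref{prop:RBMtoAirySheet} for multiple narrow wedge data, then extend to all of $\UC$ by density using the $\UC$-continuity of both sides of \eqref{eq:var1}. The paper leaves the passage from $\fs=0$ to general $\fs$ implicit, so your remark on time-shift stationarity and the Markov property is a welcome clarification.
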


\section{From TASEP to RBM}\label{sec:TASEPtoRBM}

\subsection{Biorthogonal ensemble for RBM}

Recall \eqref{eq:pinv} the operator $\p^{-1}f(x)=\int_{-\infty}^x\d y\,f(y)$, the notation being consistent with the fact that, when restricted to a suitable domain, $\p^{-1}$ is the inverse of the derivative operator $\p$.
Recall also that $\p^{-1}$ can be regarded as an integral operator with integral kernel $\p^{-1}(x,y)=\uno{x>y}$; more generally, it is easy to check that $\p^{-m}\coloneqq(\p^{-1})^m$ has integral kernel
\begin{equation}\label{eq:defQbm}
\p^{-m}(x,y)=\frac{(x-y)^{m-1}}{(m-1)!}\uno{x>y}.
\end{equation}

The next result can be derived by following the biorthogonalization approach introduced in \cite{sasamoto,borFerPrahSasam} for TASEP in the case of RBM (based on the explicit.
The proof is also contained implicitly in the proof of \cite[Prop. 4.2]{ferrariSpohnWeiss}, see also \cite[Lem. 3.5]{ferrariSpohnWeiss-book}.
Or it can alternately be derived by taking the low density limit of the result for TASEP from \cite{sasamoto,borFerPrahSasam}.

\begin{thm}\label{thm:BFPS-RBMs}
Consider RBM with  initial condition $\{ \Xb_0(i) \}_{i=1}^{\infty}$. 
For any indices $1 \leq n_1 < n_2 < \dotsc <n_m$, any locations $a_1,\dotsc,a_m\in\rr$ and any $t > 0$, 
\begin{equation}\label{eq:extKernelProbBFPS}
\pp\big(\Xb_t(n_j) > a_j, j=1,\dotsc,m \big) = \det\!\left(\fI-\bar\chi_a\Kb_t\bar\chi_a\right)_{L^2(\{n_1,\dotsc,n_m\}\times\rr)}
\end{equation}
with
\begin{equation}\label{eq:KtRBM}
\Kb_t(n_i,x_i;n_j,x_j)=-\p^{-(n_j-n_i)}(x_i,x_j)\uno{n_i<n_j}+\sum_{k=1}^{n_j}\Psib^{n_i}_{n_i-k}(x_i)\Phib^{n_j}_{n_j-k}(x_j),
\end{equation}
where
\begin{equation}\label{eq:defPsiRBM}
\Psib^n_k = \p^{k}e^{\frac12t\p^{2}}\delta_{\Xb_{0}(n-k)}
\end{equation}
for $k<n$ and the functions $\Phib^n_k$ are defined implicity by:
\begin{enumerate}[label={\normalfont (\arabic{*})}]
\item The biorthogonality relation $\langle\Psib_k^{n},\Phib_\ell^{n}\rangle_{L^2(\rr)}=\uno{k=\ell}$;
\label{orthoRBM}
\smallskip
\item $\Phib^n_k(x)$ is a polynomial of degree at most $n-1$ in $x$ for each $k$.\label{polyRBM}
\end{enumerate} 
\end{thm}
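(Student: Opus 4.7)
The plan is to follow the biorthogonalisation scheme of Sasamoto and Borodin--Ferrari--Pr\"ahofer--Sasamoto \cite{sasamoto,borFerPrahSasam} for TASEP, adapted to the continuous-space RBM setting essentially as in the proof of \cite[Prop.~4.2]{ferrariSpohnWeiss}. First I would obtain a Sch\"utz-type determinantal expression for the joint density of $(\Xb_t(1),\dotsc,\Xb_t(N))$ at time $t$ from deterministic ordered initial data. On $\{x_N<\dotsm<x_1\}$ this density takes the form $\det\!\big[F^t_{i-j}(x_i,\Xb_0(j))\big]_{i,j=1}^N$, where the $F^t_k$'s are built from $\p^k$ (for $k\ge 0$) and $\p^{-|k|}$ (for $k<0$) acting on the Gaussian heat kernel; this can be derived from the Skorokhod representation by telescoping the one-sided reflection into a signed sum and antisymmetrising as in \cite{ferrariSpohnWeiss-book}, or alternatively read off as the $\kappa\to\infty$ low-density limit \eqref{eq:TASEPtoRBMs} of Sch\"utz's formula for TASEP.

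With the joint density in this determinantal form, the second step is to apply the Eynard--Mehta theorem to produce a Fredholm determinant for the $m$-point marginal of $(\Xb_t(n_1),\dotsc,\Xb_t(n_m))$ with a correlation kernel of the canonical biorthogonal-ensemble shape $-\p^{-(n_j-n_i)}(x_i,x_j)\uno{n_i<n_j}+\sum_k \Psib^{n_i}_{n_i-k}(x_i)\Phib^{n_j}_{n_j-k}(x_j)$. The first term is the propagator in the particle-index direction, obtained by iterating the one-step propagator $\p^{-1}$ coming from the $k<0$ pieces of $F^t_k$. Unwinding Eynard--Mehta identifies $\Psib^n_k=\p^k e^{\frac12 t \p^2}\delta_{\Xb_0(n-k)}$: the $\p^k$ comes from the row structure of the Sch\"utz determinant while $e^{\frac12 t \p^2}$ is the free heat propagator applied to the initial delta masses. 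The dual functions $\Phib^n_k$ are then produced by Eynard--Mehta as biorthogonal partners to $\{\Psib^n_\ell\}_{\ell=0}^{n-1}$ inside any chosen $n$-dimensional subspace of $L^2(\rr)$ on which the pairing is non-degenerate; taking that subspace to be polynomials of degree $\le n-1$ is allowed because the Gram matrix of $\{\Psib^n_\ell\}$ against $\{1,x,\dotsc,x^{n-1}\}$ reduces, up to invertible Gaussian factors, to a Vandermonde in the distinct positions $\Xb_0(1),\dotsc,\Xb_0(n)$, yielding the required non-degeneracy and hence existence and uniqueness of $\Phib^n_k$ subject to \ref{orthoRBM}--\ref{polyRBM}.

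The main technical obstacle is the first step: establishing the Sch\"utz-type determinantal density for a system with \emph{one-sidedly reflected} Brownian motions, since the one-sided nature breaks the straightforward antisymmetric cancellation that works for non-colliding Brownian paths. The cleanest route to avoid re-doing that calculation is to derive the whole theorem from the low-density limit of the already-established BFPS formula for TASEP via \eqref{eq:TASEPtoRBMs}: one needs to check that the TASEP kernel of \cite{borFerPrahSasam}, under $\Xt_0(i)=\sqrt{\kappa}\tts \Xb_0(i)$, converges pointwise, and with enough control (of the kind provided by trace-class arguments of the sort used in the proof of Prop.~\ref{prop:multinw}) to pass to the limit inside the Fredholm determinant, to \eqref{eq:KtRBM}, with the Charlier-type ingredients turning into their Hermite/Gaussian analogues and the discrete difference operator turning into $\p^{-1}$.
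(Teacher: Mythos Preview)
Your proposal is correct and follows essentially the same route the paper indicates: the paper does not give its own proof of this theorem but simply records that it follows from the Sasamoto/BFPS biorthogonalisation scheme applied to RBM (with the Sch\"utz-type density as starting point, for which it cites \cite[Prop.~8]{warren-dysonBM}), and mentions the low-density limit from TASEP as an alternative derivation---exactly the two strategies you outline. One small correction: the Sch\"utz-type determinantal density for RBM is not obtained by ``telescoping the Skorokhod representation and antisymmetrising'' but rather via interlacing/Markov-function arguments as in Warren's paper; your alternative of reading it off from the low-density limit of Sch\"utz's TASEP formula is fine, though.
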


\begin{rem}
Using \eqref{eq:Hmt1} we have that $\Psib^n_k(x)=(-1)^kt^{k/2}h_{k}(t,x-X_0(n-k))w_t(x-X_0(n-k))$ with $h_k(t,\cdot)$ the scaled Hermite polynomials $h_k(t,x)=H_k(x/\sqrt{t})$, which are orthogonal with respect to the Gaussian weight $w_t(x)=(2\pi t)^{-1/2}e^{-\frac1{2t}x^2}$.
Hence (ignoring the prefactor $(-1)^kt^{k/2}$) the problem of finding the $\Phib^n_k$'s can be rephrased as follows:
\begin{quote}
For fixed $n>0$, and given a family of \emph{shifted Hermite functions} $(f_k)_{k=0,\dotsc,n-1}$, $f_k(x)=h_{k}(t,x-\Xb_0(n-k))w_t(x-\Xb_0(n-k))$, find a family of polynomials $(g_k)_{k=0,\dotsc,n-1}$, with $g_k$ of degree $k$, which is biorthogonal to $(f_k)_{k=0,\dotsc,n-1}$.
\end{quote}
\end{rem}

The challenge in such problems  is to actually find the biorthogonal functions $\Phib^n_k$ in a form which is useful.
Our strategy here is to compute them formally as a limit of the corresponding biorthogonal functions found for TASEP in \cite{fixedpt} and then simply check that the result satisfies \ref{orthoRBM} and \ref{polyRBM} above.

\subsection{TASEP}

$N$-particle TASEP was solved by \citet{MR1468391} using the coordinate Bethe ansatz, which leads to a formula for the transition probabilities given as the determinant of an explicit $N\times N$ matrix (an analogous formula can be written for RBM, see \cite[Prop. 8]{warren-dysonBM}, and is the starting point in the derivation of Thm. \ref{thm:BFPS-RBMs}).
However, and as in Thm. \ref{thm:BFPS-RBMs}, one is usually interested in studying $m$-point distributions of the process for arbitrary $m\leq N$, and moreover in obtaining formulas which are suitable for taking $N\to\infty$.
\cite{sasamoto,borFerPrahSasam} realized that this can be achieved by rewriting \citeauthor{MR1468391}'s formula in terms of a (signed) determinantal point process on a space of Gelfand-Tsetlin patterns and employing the Eynard-Mehta technology \cite{eynardMehta} to derive a Fredholm determinant formula for the $m$-point distributions.
The result is precisely the TASEP version of Thm. \ref{thm:BFPS-RBMs}, but we will not need to state it explicitly.
Instead, we will state a version of this result which follows from \cite{fixedpt}, where the biorthogonalization is performed explicitly.

In order to state the result we need to introduce some additional notation.  Define kernels
\begin{equation}\label{eq:defQ}
Q(x,y)=\uno{x>y},\qquad Q^{-1}(x,y)=\uno{x=y-1}-\uno{x=y}.
\end{equation}
They can be regarded as operators acting on suitable functions $f\!:\zz\longrightarrow\rr$, so for example
$Qf(x)=\textstyle\sum_{y<x}f(y)$, $Q^{-1}f(x)=\nabla^+f(x)\coloneqq f(x+1)-f(x)$.
Next we consider the kernel
\begin{equation}\label{eq:defPoissonKernel}
e^{-t\nabla^-}(x,y)=e^{-t}\frac{t^{x-y}}{(x-y)!}\uno{x\geq y}
\end{equation}
with $\nabla^-f(x)=f(x)-f(x-1)$.
$(e^{-t\nabla^-})_{t\geq0}$ is the semigroup of a Poisson process with jumps to the left at rate $1$; this formula is actually valid for all $t\in\rr$ and it defines the whole group of operators $(e^{-t\nabla^-})_{t\in\rr}$ (so, in particular $e^{-t\nabla^-}$ is invertible, with inverse $e^{t\nabla^-}$).

\begin{thm}[{\cite[Lem. 3.4]{borFerPrahSasam}, \cite[Thm. 2.2]{fixedpt}}]\label{thm:TASEP-biorth}
Suppose that TASEP starts with particles  $\Xt_0(1)>\Xt_0(2)>\dotsm$ and let $1\leq n_1<n_2<\dotsm<n_m$. 
Then, for $t>0$,
\begin{equation}\label{eq:extKernelProbTASEP}
  \pp\!\left(\Xt_t(n_j)>a_j,~j=1,\dotsc,m\right)=\det\!\left(\fI-\bar\chi_a\Kt_t\bar\chi_a\right)_{\ell^2(\{n_1,\dotsc,n_m\}\times\zz)}
\end{equation}
with
\begin{equation}\label{eq:KtTASEP}
\Kt_t(n_i,x_i;n_j,x_j)=-Q^{n_j-n_i}(x_i,x_j)\uno{n_i<n_j}+\sum_{k=1}^{n_j}\Psit^{n_i}_{n_i-k}(x_i)\Phit^{n_j}_{n_j-k}(x_j),
\end{equation}
where, for $k<n$,
\begin{equation}\label{eq:defPsiTASEP}
\Psit^{n}_k(x)=e^{-t\nabla^-}Q^{-k}\delta_{\Xt_0(n-k)}(x)=\frac1{2\pi\I}\oint_{\Gamma_0}\d w\,\frac{(1-w)^k}{w^{x+k+1-X_0(n-k)}}e^{t(w-1)}
\end{equation}
with $\Gamma_0$ any positively oriented simple loop including the pole at $w=0$ but not the one at $w=1$, and where the functions $\Phi_k^{n}(x)$, $k=0,\ldots,n-1$, are given by $\Phit^{n}_k(x)=(e^{t\nabla^-})^*\h^{n}_k(0,\cdot)(x)$ with $h_{k}^{n}(\ell,\cdot)\!:\zz\longrightarrow\rr$ defined recursively through $\h_{k}^{n}(k,z)  =1$, $z\in\zz$, and, for $\ell<k$, $\h_{k}^{n}(\ell,X_{0}(n-\ell))=0$, and
\begin{equation}\label{eq:defhTASEP}
\h_{k}^{n}(\ell,z)=\textstyle\sum_{y=x}^{X_{0}(n-\ell)}\h_{k}^{n}(\ell+1,y)
\end{equation}
where, as usual, $\sum_{y=x}^z h(y) = -\sum_{y=z}^x h(y)$ if $x>z$.
\end{thm}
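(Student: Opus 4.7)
The statement has two quite separate parts: (i) the Fredholm determinant representation of the $m$-point distributions of $N$-particle TASEP, and (ii) the explicit description of the biorthogonal functions $\Psit^n_k$, $\Phit^n_k$. My plan is to obtain (i) through the standard pipeline \textsf{Sch\"utz's formula} $\longrightarrow$ \textsf{signed determinantal process on interlaced patterns} $\longrightarrow$ \textsf{Eynard--Mehta}, and then, for (ii), to \emph{guess} the explicit form of $\Phit^n_k$ and verify that it meets the two defining properties of biorthogonality and polynomial degree.

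For part (i), I would begin from Sch\"utz's classical coordinate Bethe ansatz formula, which expresses the transition density of $N$-particle TASEP from $\Xt_0$ to $\Xt_t$ as $\det\!\big[F_{i-j}(t,x_i-\Xt_0(j))\big]_{i,j=1}^N$ for an explicit family of functions $F_k$ built from the Poisson semigroup $e^{-t\nabla^-}$ and the difference operator $Q^{-1}$. Inserting this into $\pp(\Xt_t(n_j)>a_j)$, writing the resulting sum over ordered $N$-tuples as a sum over a Gelfand--Tsetlin--type pattern, and pushing the indicators into one of the columns of the determinant, rewrites the joint law in the biorthogonal-measure form to which the Eynard--Mehta theorem applies. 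The output is precisely \eqref{eq:extKernelProbTASEP}--\eqref{eq:KtTASEP} with the $\Psit^n_k$ given abstractly by applying $e^{-t\nabla^-}Q^{-k}$ to $\delta_{\Xt_0(n-k)}$ and the $\Phit^n_k$ characterized only by the orthogonality relation and the degree-$(n-1)$ polynomial condition. The contour-integral formula for $\Psit^n_k$ in \eqref{eq:defPsiTASEP} is then read off from the residue representations of $e^{-t\nabla^-}$ and $Q^{-k}$, since $(Q^{-1}f)(x)=f(x+1)-f(x)$ has symbol $w^{-1}-1$ and $e^{-t\nabla^-}$ has symbol $e^{t(w-1)}$, up to suitable encircling of $w=0$.

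For part (ii), my strategy is to propose $\Phit^n_k=(e^{t\nabla^-})^*\h^n_k(0,\cdot)$ with $\h^n_k$ defined by the recursion \eqref{eq:defhTASEP} and boundary conditions $\h^n_k(k,\cdot)\equiv1$, $\h^n_k(\ell,\Xt_0(n-\ell))=0$ for $\ell<k$, and then check the two requirements. The polynomial-degree property follows by induction on $\ell$ descending from $k$: $\h^n_k(k,\cdot)$ has degree $0$, and each step of \eqref{eq:defhTASEP} is a summation from $x$ to $\Xt_0(n-\ell)$, which raises the degree by one, so $\h^n_k(0,\cdot)$ is of degree at most $k\le n-1$; the adjoint of $e^{t\nabla^-}$ preserves polynomial degree, so $\Phit^n_k$ is a polynomial of degree at most $n-1$, as required by \ref{polyRBM} (in the TASEP analog). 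For biorthogonality, the adjoint relation $\langle e^{-t\nabla^-}f,g\rangle=\langle f,(e^{t\nabla^-})^*g\rangle$ cancels the heat-type factor and reduces the claim to showing $\langle Q^{-k}\delta_{\Xt_0(n-k)},\h^n_\ell(0,\cdot)\rangle_{\ell^2(\zz)}=\delta_{k\ell}$. Writing out $Q^{-k}=(\nabla^+)^k$ and summation-by-parts $k$ times converts this pairing into the value of a $k$-fold successive sum of $\h^n_\ell$ evaluated at $\Xt_0(n-k)$, which, by the very recursion \eqref{eq:defhTASEP} together with the boundary conditions $\h^n_\ell(j,\Xt_0(n-j))=0$ for $j<\ell$ and $\h^n_\ell(\ell,\cdot)\equiv1$, is seen to equal $\uno{k=\ell}$.

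\textbf{Main obstacle.} The Eynard--Mehta step of part (i) is routine once the determinantal structure is exposed, and the polynomial-degree check is immediate. The real work is the biorthogonality verification in part (ii): identifying the correct ansatz $\h^n_k$ satisfying exactly the right boundary conditions so that the telescoping/summation-by-parts argument matches $\delta_{k\ell}$ rather than producing residual boundary terms. Once the recursion \eqref{eq:defhTASEP} and its initial/boundary data are correctly set up (this is the insight of \cite{fixedpt}), the verification is a bookkeeping exercise, but arriving at that form \emph{ex nihilo}, in a shape that is subsequently amenable to passing to the continuous (RBM) limit via \eqref{eq:TASEPtoRBMs}, is the conceptually delicate point.
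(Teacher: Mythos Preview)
Your proposal is correct, and its logic matches what the paper does---with the caveat that the paper does not actually prove this theorem in the text: it is quoted from \cite{borFerPrahSasam} (the Fredholm determinant with implicitly characterized $\Phit^n_k$) and \cite{fixedpt} (the explicit $\Phit^n_k$), the only novelty being that $\h^n_k$ is written via the recursion \eqref{eq:defhTASEP} rather than as the solution of a discrete backwards heat equation initial--boundary value problem. Your part (i) is exactly the Sch\"utz $\to$ Gelfand--Tsetlin $\to$ Eynard--Mehta pipeline the paper summarizes in words before Theorem~\ref{thm:BFPS-RBMs}, and your part (ii) verification---cancel the semigroup factor by adjointness, then reduce biorthogonality to the identity $(\nabla^+)^k\h^n_\ell(0,\Xt_0(n-k))=(-1)^k\uno{k=\ell}$ via the recursion and boundary data---is precisely the discrete counterpart of the argument the paper does carry out in full for the RBM case in the proof of Proposition~\ref{prop:RBMbiorth}.
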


\begin{rem}
We have stated the last result in a slightly different way than (but equivalent to) \cite{fixedpt}.
First, while in that paper the TASEP kernels were conjugated by $2^x$ in order to connect them directly to certain probabilistic objects (basically a random walk with Geom$[\frac12]$ steps), here we will omit that conjugation; this will allow us to state formulas in terms of slightly simpler kernels which are available in the continuous space setting of RBM.
Second, the TASEP biorthogonal functions $\Phit^n_k$, which solve a discrete space version of \ref{orthoRBM} and \ref{polyRBM} of Thm. \ref{thm:BFPS-RBMs}, were expressed in \cite{fixedpt} in terms of the solution of an initial--boundary value problem for a discrete backwards heat equation, while here we write down this solution explicitly; this allows us to compute the limiting $\Phib^n_k$'s very easily.
\end{rem}

\subsection{Brownian scaling limit of the TASEP biorthogonal functions}

We compute now the limit of the TASEP formulas under the scaling \eqref{eq:TASEPtoRBMs}.
The limits can be proved rigorously but since we don't need it, we will just state them and explain how they arise (see comments just after Thm. \ref{thm:BFPS-RBMs}.)

Since $Q$ is a discrete integration operator, it is not surprising that after scaling it converges to $\p^{-1}$.
In fact, we have for any $t\in\rr$ and $m\in\zz_{\geq0}$ that
\begin{equation}\label{eq:Qlim}
\kappa^{-(m-1)/2}Q^{m}(\sqrt{\kappa}\ts x+\kappa t,\sqrt{\kappa}\ts y+\kappa t)\xrightarrow[\kappa\to\infty]{}\p^{-m}(x,y),
\end{equation}
as can be checked for instance by using the explicit formula $Q^{m}(x,y)=\binom{x-y-1}{m-1}\uno{x\geq y+m}$.
The limit can be extended suitably to all $m\leq0$ to get that, after scaling, $Q^{-m}$ converges to $\p^{m}$.
Consider next the Poisson semigroup $(e^{-t\nabla^-})_{t\geq0}$.
We are interested in the scaling $\sqrt{\kappa}\ts e^{-\kappa t\nabla^-}\!(\sqrt{\kappa}\ts x,\sqrt{\kappa}\ts y+\kappa t)$, which is simply $\sqrt{\kappa}$ times the probability that a Poisson random variable with parameter $\kappa t$ equals $\sqrt{\kappa}\ts(y-x)+\kappa t$.
By the Central Limit Theorem,
\begin{equation}\label{eq:Poissonlim}
\sqrt{\kappa}\ts e^{-\kappa t\nabla^-}\!(\sqrt{\kappa}\ts x,\sqrt{\kappa}\ts y+\kappa t)\xrightarrow[\kappa\to\infty]{}e^{\frac12t\p^2}(x,y).
\end{equation}
Combining the above two facts leads directly to the following: For $k<n$, and replacing $t$ by $\kappa t$ and taking $\Xt_0(n-k)=\sqrt{\kappa}\ts\Xb_0(n-k)$ in \eqref{eq:defPsiTASEP}, 
\begin{equation}\label{eq:PsiTASEPlim}
\kappa^{k/2+1}\Psit^n_k(\sqrt{\kappa}\ts x+\kappa t)\xrightarrow[\kappa\to\infty]{}\p^{k}e^{\frac12t\p^2}\delta_{\Xb_0(n-k)}(x)=\Psib^n_k(x).
\end{equation}

We turn now to the $\Phit^n_k$'s.
For the functions $\h^n_k$ which are used to construct them (see \eqref{eq:defhTASEP}) we have
\begin{equation}\label{eq:hlim}
\kappa^{-(k-\ell)/2}\h^n_k(\ell,\sqrt{\kappa}\tts x)\xrightarrow[\kappa\to\infty]{}\hb^n_k(\ell,x)
\end{equation}
where $\hb^n_k(\ell,x)$ is given by $\hb_{k}^{n}(k,z)=1$ and 
\begin{equation}\label{eq:defhRBM}
\begin{split}
\hb_{k}^{n}(\ell,z)&=\int_x^{X_{0}(n-\ell)}\d y\,\hb_{k}^{n}(\ell+1,y) \quad\uptext{for}~\ell<k.
\end{split}
\end{equation}
Each $\hb^n_k(\ell,\cdot)$ is a polynomial of degree $k-\ell$.
We want to use \eqref{eq:hlim} to write a limit for $\Phit^n_k$.
This function is defined by applying $e^{t\nabla^-}$ to $h^n_k(0,\cdot)$, and from \eqref{eq:Poissonlim} we have, formally, that under the scaling we are interested in, $e^{t\nabla^-}$ should converge to $e^{-\frac12t\p^2}$.
Hence
\begin{equation}\label{eq:PhiTASEPlim}
\kappa^{-(k/2+1)}\Phit^n_k(\sqrt{\kappa}\ts x+\kappa t)\xrightarrow[\kappa\to\infty]{}e^{-\frac12t\p^2}\hb^n_k(0,\cdot).
\end{equation}

\begin{rem}\label{rem:bckwdsHK}
The backwards heat kernel appearing above does not make sense in general, but in this setting it is applied to the polynomial $\hb_{k}^{n}(0,\cdot)$, in which case its action can be defined by expanding it as a (finite) power series.
Furthermore, one can check that the group property $e^{s\p^2}e^{t\p^2}p=e^{(s+t)\p^2}p$ holds for any $s,t\in\rr$ and any polynomial $p$.
\end{rem}

The preceeding computations suggest the following result, which we prove directly.
It is worth noting how simple the proofs are using this method once one has the candidate biorthogonal functions. 

\begin{prop}\label{prop:RBMbiorth}
The functions $\Phib^n_k$ defined through \ref{orthoRBM} and \ref{polyRBM} of Thm. \ref{thm:BFPS-RBMs} are given explicitly by
\begin{equation}\label{eq:defPhib}
\Phib^n_k=e^{-\frac12t\p^2}\hb^n_k(0,\cdot).
\end{equation}
\end{prop}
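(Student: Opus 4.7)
The plan is to verify the two defining properties \ref{orthoRBM} and \ref{polyRBM} of Thm.~\ref{thm:BFPS-RBMs} directly from the explicit formula $\Phib^n_k=e^{-\frac12t\p^2}\hb^n_k(0,\cdot)$, using only the recursion \eqref{eq:defhRBM}.

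For the polynomial property, a straightforward induction on $k-\ell\ge0$ starting from $\hb^n_k(k,z)=1$ shows from \eqref{eq:defhRBM} that $\hb^n_k(\ell,\cdot)$ is a polynomial of degree $k-\ell$. In particular $\hb^n_k(0,\cdot)$ has degree $k\le n-1$, and since by Rem.~\ref{rem:bckwdsHK} the operator $e^{-\frac12t\p^2}$ acts on a polynomial as a finite power series preserving degree, $\Phib^n_k$ is itself a polynomial of degree at most $n-1$, giving \ref{polyRBM}.

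For the biorthogonality, I would write $\Psib^n_k(x)=\p_x^k G_t(x-\Xb_0(n-k))$ with $G_t(u)=\frac1{\sqrt{2\pi t}}e^{-u^2/(2t)}$ the Gaussian heat kernel. The integrand $\Psib^n_k(x)\Phib^n_\ell(x)$ is a Schwartz function times a polynomial, so $k$ integrations by parts are justified with no boundary contribution and give
\begin{equation*}
\langle\Psib^n_k,\Phib^n_\ell\rangle=(-1)^k\!\int G_t(x-\Xb_0(n-k))\,(\Phib^n_\ell)^{(k)}(x)\,\d x=(-1)^k\bigl(e^{\frac12t\p^2}(\Phib^n_\ell)^{(k)}\bigr)(\Xb_0(n-k)),
\end{equation*}
the last equality being the Taylor identity $\int G_t(x-a)P(x)\,\d x=(e^{\frac12t\p^2}P)(a)$ for polynomial $P$. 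Because $\p$ commutes with $e^{\pm\frac12t\p^2}$ and $e^{\frac12t\p^2}e^{-\frac12t\p^2}=\fI$ on polynomials (Rem.~\ref{rem:bckwdsHK}), this collapses to $(-1)^k\p^k\hb^n_\ell(0,\Xb_0(n-k))$.

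It then remains to check $(-1)^k\p^k\hb^n_\ell(0,\Xb_0(n-k))=\uno{k=\ell}$, which I would split into three cases. For $k>\ell$ this is immediate since $\hb^n_\ell(0,\cdot)$ has degree $\ell$. For $k\le\ell$, differentiating \eqref{eq:defhRBM} gives $\p_z\hb^n_\ell(\ell',z)=-\hb^n_\ell(\ell'+1,z)$ for each $\ell'<\ell$, and iterating yields $\p^k\hb^n_\ell(0,z)=(-1)^k\hb^n_\ell(k,z)$. When $k=\ell$ this evaluates to $(-1)^k\cdot 1$, so the inner product equals $1$; when $k<\ell$, evaluating \eqref{eq:defhRBM} at $\ell'=k$ and $z=\Xb_0(n-k)$ gives $\hb^n_\ell(k,\Xb_0(n-k))=\int_{\Xb_0(n-k)}^{\Xb_0(n-k)}\hb^n_\ell(k+1,y)\,\d y=0$. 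The only genuinely delicate point in the argument is legitimizing the manipulations of $e^{\pm\frac12t\p^2}$ outside $L^2$; since these operators are ever only applied to polynomials or paired against the Gaussian $G_t$ via the Taylor identity, this is handled by Rem.~\ref{rem:bckwdsHK}, and is essentially the sole conceptual hurdle in the proof.
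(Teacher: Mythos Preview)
Your proof is correct and follows essentially the same route as the paper's: reduce $\langle\Psib^n_k,\Phib^n_\ell\rangle$ to $(-1)^k\p^k\hb^n_\ell(0,\Xb_0(n-k))$ by cancelling $e^{\frac12t\p^2}$ against $e^{-\frac12t\p^2}$ and moving the $k$ derivatives onto the polynomial factor, then read off $\uno{k=\ell}$ from the recursion \eqref{eq:defhRBM}. The only difference is cosmetic---the paper packages the first step as a distributional pairing $\langle\p^k\delta_{\Xb_0(n-k)},\hb^n_\ell(0,\cdot)\rangle$ rather than your explicit integration by parts and Taylor identity, and states the identity $\p^k\hb^n_\ell(0,\Xb_0(n-k))=(-1)^k\uno{k=\ell}$ without the three-case breakdown---but the argument is the same.
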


\begin{proof}
The fact that $\Phib^n_k$ is a polynomial of degree at most $n-1$ follows from the above discussion and the same fact for $\hb^n_k(0,\cdot)$.
For the biorthogonality, we note first that by \eqref{eq:defhRBM} we have the simple identity $\p^{k}\mathbf{h}_{\ell}^{n}(0,X_{0}(n-k))=(-1)^{k}\uno{k=\ell}$.
Using this and the comment made in Rem. \ref{rem:bckwdsHK} we compute
\begin{align*}
\langle\Psib_k^{n},\Phib_\ell^{n}\rangle_{L^2(\rr)}&=\langle\p^{k}e^{\frac12t\p^{2}}\delta_{\Xb_{0}(n-k)},e^{-\frac12t\p^{2}}\hb^n_\ell(0,\cdot)\rangle_{L^2(\rr)}\\
&=\langle\p^{k}\delta_{\Xb_{0}(n-k)},\hb^n_\ell(0,\cdot)\rangle_{L^2(\rr)}=(-1)^k\p^k\hb^n_\ell(0,\Xb_0(n-k))=\uno{k=\ell}
\end{align*} 
as desired, where we have used the formula $\langle\delta^{(k)}_{x_0},f\rangle_{L^2(\rr)}=(-1)^kf^{(k)}(x_0)$ for $\delta^{(k)}_{x_0}$ the $k$-th distributional derivative of $\delta_{x_0}$.
\end{proof}

\subsection{Representation as hitting probabilities}

The next step is to represent $\Kb_t$ in terms of hitting times, thus producing a formula which is nicely set up for the 1:2:3 KPZ scaling limit.

Let $(B^*_k)_{k\geq0}$ denote a random walk with Exp$[1]$ steps to the right; its transition matrix is $Q_\uptext{exp}^*$ with $Q_\uptext{exp}(x,y)=e^{y-x}\p^{-1}(x,y)$ the transition matrix of the walk $B_k$ introduced earlier.
We claim that below the ``curve'' $(\Xb_0(n-\ell))_{\ell=0,\dotsc,n-1}$ defined by the initial data, $\hb_{k}^{n}(\ell,\cdot)$ can be represented as a hitting probability,
\begin{equation}\label{eq:hbnk-prob}
\hb_{k}^{n}(\ell,x)=e^{\Xb_{0}(n-k)-x}\ts\pp_{B_{\ell-1}^{\ast}=x}\!\left(\tau^{\ell,n}=k\right),\quad x<\Xb_0(n-\ell),
\end{equation}
with $\tau^{\ell,n}=\min\{\ell\leq k\leq n,B_{k}^{\ast}\geq\Xb_{0}(n-k)\}$.
This can be proved easily using \eqref{eq:defhRBM} and the formula
$\pp_{B_{\ell-1}^{\ast} = z}(\tau^{\ell,n}=k)=\int_{z}^{\Xb_{0}(n-\ell)}\d y\ts e^{-(y-z)}\pp_{B_{\ell}^{\ast}=y}(\tau^{\ell,n}=k)$, valid for $\ell<k$.
Next define
\begin{equation}\label{eq:defG0n}
G_{0,n}(x_{1},x_{2})=\sum_{k=0}^{n-1}\p^{-(n-k)}\delta_{\Xb_{0}(n-k)}(x_{1})\hb_{k}^{n}(0,x_{2}),
\end{equation}
so that 
\begin{equation}\label{eq:KTn-G0nned}
\Kb_t(n_i,\cdot;n_j,\cdot)=-\p^{-(n_j-n_i)}\uno{n_i<n_j}+\p^{n_i-n_j}e^{\frac12t\p^2}G_{0,n_j}e^{-\frac12t\p^2};
\end{equation}
note that the backwards heat kernel acts on the second variable of $G_{0,n_j}$, which defines a polynomial, so the action is well defined as in Rem. \ref{rem:bckwdsHK}.

Introduce the kernel
\begin{equation}\label{eq:defpinvext}
\bp{-m}(x,y)=\frac{(x-y)^{m-1}}{(m-1)!},
\end{equation}
which may be thought of as an analytic extension of $\p^{-m}(x,y)=\frac{(x-y)^{m-1}}{(m-1)!}\uno{x>y}$.
Note that $\big(\bp{-m}\big)_{m\geq0}$ is no longer a semigroup, but we do have $\p\bp{-m}=\bp{-m+1}$ for $m>1$ (however $\p\bp{-1}=0$).

\begin{prop}\label{prop:G0n}
Let $(B_{k})_{k\geq0}$ be a random walk taking \uptext{Exp}$[1]$ steps to the left and $\tau=\min\{k\geq0\!:B_{k}\geq\Xb_{0}(k+1)\}$.
For any $x_{1},x_{2}$,
\begin{equation}
G_{0,n}(x_{1},x_{2})=\ee_{B_{0}=x_{1}}\!\left[e^{x_1-B_{\tau}}\bp{-n+\tau}(B_{\tau},x_{2})\uno{\tau<n}\right].\label{eq:G0next}
\end{equation}
\end{prop}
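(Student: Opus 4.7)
I would prove this by induction on $n$, combining the Markov property of the exponential walk $B_k$ with a shift identity for the polynomials $\hb_k^n(\ell,\cdot)$ and a Hermite-type interpolation argument at the end. Write $F_n(x_1,x_2)$ for the right hand side of \eqref{eq:G0next} and $P_k(x_2)=\hb_k^n(0,x_2)$; the base case $n=1$ reduces to $G_{0,1}(x_1,x_2)=\uno{x_1>\Xb_0(1)}=F_1(x_1,x_2)$ up to the Lebesgue-null set $\{x_1=\Xb_0(1)\}$. The inductive step splits into two regions of $x_1$.

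\textbf{Case $x_1\ge\Xb_0(1)$.} Here $B_0=x_1\ge\Xb_0(1)$ forces $\tau=0$, so $F_n(x_1,x_2)=\bp{-n}(x_1,x_2)=\frac{(x_1-x_2)^{n-1}}{(n-1)!}$. On the LHS, every indicator in \eqref{eq:defG0n} equals $1$, so $G_{0,n}(x_1,x_2)=\sum_{m=1}^{n}\frac{(x_1-\Xb_0(m))^{m-1}}{(m-1)!}P_{n-m}(x_2)$. It then suffices to prove the polynomial identity
\[
\sum_{m=1}^{n}\frac{(x_1-\Xb_0(m))^{m-1}}{(m-1)!}\,P_{n-m}(x_2)=\frac{(x_1-x_2)^{n-1}}{(n-1)!}.
\]
I would view both sides as polynomials in $x_2$ of degree $\leq n-1$ for each fixed $x_1$. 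Using $\p_{x_2}^j P_k(x_2)=(-1)^j\hb_k^n(j,x_2)$ together with the defining property $\hb_k^n(j,\Xb_0(n-j))=\uno{k=j}$ (extended by degree considerations when $j>k$), one computes that both sides have the same $j$-th $x_2$-derivative at $x_2=\Xb_0(n-j)$, namely $(-1)^j\frac{(x_1-\Xb_0(n-j))^{n-j-1}}{(n-j-1)!}$, for every $j=0,1,\dots,n-1$. The difference $D(x_2)$ is then a polynomial of degree $\leq n-1$ satisfying $D^{(j)}(\Xb_0(n-j))=0$ for $j=0,\dots,n-1$; the associated Birkhoff linear system in the monomial basis has an upper-triangular matrix with nonzero diagonal $(0!,1!,\dots,(n-1)!)$, forcing $D\equiv 0$.

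\textbf{Case $x_1<\Xb_0(1)$.} Here $\tau\ge 1$ almost surely, so I would condition on $B_1=y$, whose conditional density is $e^{y-x_1}\uno{y<x_1}$. Setting $\wt B_k\coloneqq B_{k+1}$ (a fresh exponential walk from $y$) and $\wt\Xb_0(j)\coloneqq\Xb_0(j+1)$, the residual hitting time $\wt\tau=\tau-1$ satisfies $B_\tau=\wt B_{\wt\tau}$ and $\bp{-n+\tau}(B_\tau,x_2)=\bp{-(n-1)+\wt\tau}(\wt B_{\wt\tau},x_2)$; the factorization $e^{x_1-B_\tau}=e^{x_1-y}e^{y-\wt B_{\wt\tau}}$ cancels the $x_1$-dependence against the transition density, yielding $F_n(x_1,x_2)=\int_{-\infty}^{x_1}\wt F_{n-1}(y,x_2)\,\mathrm{d}y=\p^{-1}\wt F_{n-1}(x_1,x_2)$, where tildes denote the shifted initial data together with $n\mapsto n-1$. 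On the LHS, the recursion \eqref{eq:defhRBM} shows that $\hb_k^n(\ell,\cdot)=\wt\hb_k^{n-1}(\ell,\cdot)$; combined with the vanishing of the $m=1$ term $\uno{x_1>\Xb_0(1)}P_{n-1}(x_2)=0$ in this region, reindexing $m'=m-1$ in \eqref{eq:defG0n} and factoring one $\p^{-1}$ gives $G_{0,n}(x_1,x_2)=\p^{-1}\wt G_{0,n-1}(x_1,x_2)$. The inductive hypothesis $\wt F_{n-1}=\wt G_{0,n-1}$ then closes the case.

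\textbf{Main obstacle.} The conceptually delicate step is the polynomial identity in Case $x_1\ge\Xb_0(1)$: it encodes the biorthogonality of Proposition~\ref{prop:RBMbiorth} in exactly the form required by the hitting-time formulation, and the Birkhoff interpolation trick is what converts the staggered vanishing $\hb_k^n(j,\Xb_0(n-j))=\uno{k=j}$ into the explicit closed-form identity. The Markov/shift reduction in Case $x_1<\Xb_0(1)$ is essentially bookkeeping once one keeps careful track of the exponential weights and the reparametrization of the hitting problem.
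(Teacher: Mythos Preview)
Your induction is correct and the two cases close as you describe (up to the null set $\{x_1=\Xb_0(1)\}$, which is the same harmless boundary ambiguity present in the paper's own statement). The route, however, is genuinely different from the paper's. The paper does not induct on $n$ nor split on $x_1$: instead it first proves the auxiliary identity \eqref{eq:hbnk-prob}, expressing $\hb_k^n(\ell,x)$ for $x<\Xb_0(n-\ell)$ as a hitting probability of the \emph{time-reversed} walk $B^*_k$ (Exp$[1]$ steps to the right), then plugs this into the definition of $G_{0,n}$ for $x_2<\Xb_0(n)$, uses the memoryless property to resum, and finally reverses the walk to obtain the right hand side of \eqref{eq:G0next} for $x_2<\Xb_0(n)$; equality for all $x_2$ then follows because both sides are polynomials in $x_2$. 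So the paper's ``analytic continuation'' is in $x_2$, yours is effectively in $x_1$ via the Birkhoff interpolation step.

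What each buys: the paper's argument is shorter and more conceptual---once \eqref{eq:hbnk-prob} is in hand the formula for $G_{0,n}$ drops out in a couple of lines, and \eqref{eq:hbnk-prob} itself makes the probabilistic content of the $\hb_k^n$'s transparent. Your argument is more elementary in that it avoids introducing the reversed walk and the memoryless manipulation; the Birkhoff interpolation step in the region $x_1\ge\Xb_0(1)$ is a clean way to encode the staggered vanishing $\hb_k^n(j,\Xb_0(n-j))=\uno{k=j}$, and the Markov reduction for $x_1<\Xb_0(1)$ is the natural recursive counterpart. Either proof would be acceptable here.
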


\begin{proof}
We have $G_{0,n}(x_{1},x_{2})=\sum_{k=0}^{n-1}\pp_{B_{-1}^*=x_2}(\tau^{0,n}=k)\pp_{B^*_0=\Xb_0(n-k)}(B^*_{n-k}=x_1)e^{x_1-x_2}$ for $x_2<\Xb_0(n)$ by \eqref{eq:hbnk-prob} and the definition of $B^*_k$.
On the other hand, $\pp_{B^*_0=\Xb_0(n-k)}(B^*_{n-k}=x_1)=\int_{\Xb_0(n-k)}^\infty\d\eta\ts e^{\Xb_0(n-k)-\eta}\ts\pp_{B^*_k=\eta}(B^*_{n-1}=x_1)$ while, by the memoryless property of the exponential, $\pp_{B_{-1}^*=x_2}(\tau^{0,n}=k,B_k^*\in\d\eta)=\pp_{B_{-1}^*=x_2}(\tau^{0,n}=k)e^{\Xb_0(n-k)-\eta}\d\eta$ for $\eta\geq\Xb_0(n-k)$.
Thus
\[G_{0,n}(x_{1},x_{2})=e^{x_1-x_2}\ee_{B^*_{-1}=x_{2}}\!\left[(Q_\uptext{exp}^*)^{n-\tau^{0,n}}(B^*_{\tau^{0,n}},x_{1})\uno{\tau^{0,n}<n}\right]\]
Reversing the direction of the walk, the right hand side equals $e^{x_1-x_2}\ee_{B_{0}=x_{1}}\!\left[(Q_\uptext{exp})^{n-\tau}(B_\tau,x_{2})\uno{\tau<n}\right]=\ee_{B_{0}=x_{1}}\!\left[e^{x_1-B_\tau}\p^{-n+\tau}(B_\tau,x_{2})\uno{\tau<n}\right]$ with $\tau$ as in the statement.
This is valid for all $x_2<\Xb_0(n)$, and for those we have $B_\tau-x_2>\Xb_0(n-\tau)-\Xb_0(n)\geq0$, which implies that the formula we just obtained coincides with the right hand side of \eqref{eq:G0next} (for such $x_2$).
But by definition we know that $G_{0,n}(x_1,x_2)$ is a polynomial in $x_2$, and so is the right hand side of \eqref{eq:G0next}.
Since they coincide at infinitely many points, the result follows.
\end{proof}

Using
\begin{align}
\p^ne^{\frac12t\p^2}(x,y)&=(-1)^nt^{-n/2}\tfrac{1}{\sqrt{2\pi t}}\tts e^{-\frac1{2t}(x-y)^2}H_{n}(\tfrac{x-y}{\sqrt{t}})=\uppsi_n(y-x)&\uptext{($n\geq0$),\qquad}\\
\bp{-n}e^{-\frac1{2t}\p^2}(x,y)&=\tfrac{1}{(n-1)!}t^{(n-1)/2}H_{n-1}(\tfrac{x-y}{\sqrt{t}})=\bar\uppsi_{n-1}(x-y)&\uptext{($n\geq1$)\,\,\qquad}\label{eq:Hmt2}
\end{align}
(the first one is \eqref{eq:Hmt1} while for the second one we have used $e^{-\frac1{2t}\p^2}x^n=t^{n/2}H_n(\frac{x}{\sqrt{t}})$ and the fact that the parity of $H_n$ matches that of $n$ to turn $(-1)^nH_n(x)$ into $H_n(-x)$) we get from \eqref{eq:KTn-G0nned} that
\begin{multline}
\Kb_t(n,x_i;n,x_j)=-\p^{-(n_j-n_i)}(x_i,x_j)\uno{n_i<n_j}\\
+\int_{-\infty}^\infty\d\eta\,\uppsi_{n_i}(t,\eta-x_i)\ee_{B_0=\eta}\!\left[e^{\eta-B_{\tau}}\bar\uppsi_{n_j-\tau-1}(t,B_\tau-x_j)\uno{\tau<n_j}\right],\end{multline}
which yields Thm. \ref{thm:RBM-formula} as well as \eqref{eq:KtRBM-SNSM}.

\vskip20pt

\noindent{\bf Acknowledgements.}
MN and JQ were supported by the Natural Sciences and Engineering Research Council of Canada.
DR was supported by Conicyt Basal-CMM Proyecto/Grant PAI AFB-170001, by Programa Iniciativa Científica Milenio grant number NC120062 through Nucleus Millenium Stochastic Models of Complex and Disordered Systems, and by Fondecyt Grant 120194.

\printbibliography[heading=apa]

\end{document}